\newtheorem{theorem}{Theorem}[section]
\newtheorem*{theorem*}{Theorem}
\newtheorem{corollary}{Corollary}[section]
\newtheorem{definition}{Definition}[section]
\newtheorem{example}{Example}[section]
\newtheorem{lemma}{Lemma}[section]
\newtheorem{observation}{Observation}[section]
\newtheorem{remark}{Remark}[section]
\newtheorem{proposition}{Proposition}[section]
\newtheorem{fact}{Fact}[section]
\crefname{claim}{Claim}{Claims}
\crefname{observation}{Observation}{Observations}
\crefname{fact}{Fact}{Facts}
\newcommand{\ZZ}{\mathbb{Z}}
\newcommand{\RR}{\mathbb{R}}
\newcommand{\CC}{\mathbb{C}}
\newcommand{\dd}{\,\mathrm{d}}
\newcommand{\hyperBinom}[2]{\genfrac{[}{]}{0pt}{}{#1}{#2}}
\DeclareMathOperator{\Tr}{Tr}
\DeclareMathOperator{\Hom}{Hom}
\DeclareMathOperator{\diag}{diag}
\DeclareMathOperator{\Span}{span}
\DeclareMathOperator{\End}{End}
\DeclareMathOperator{\supp}{supp}
\DeclareMathOperator{\GL}{GL}
\DeclarePairedDelimiter{\set}{\{}{\}}
\DeclarePairedDelimiter{\abs}{\vert}{\vert}
\DeclarePairedDelimiter{\norm}{\Vert}{\Vert}
\DeclarePairedDelimiter{\lrpar}{(}{)}
\DeclarePairedDelimiter{\lrangle}{\langle}{\rangle}
\begin{document}

\title{Explicit construction of exact unitary designs
}

\author{Eiichi Bannai \footnote{Professor Emeritus of Kyushu University, Fukuoka, Japan.
Postal Address: Asagaya-minami 3-2-33, Suginami-ku, Tokyo 166-0004, Japan. \href{bannai@math.kyushu-u.ac.jp}{bannai@math.kyushu-u.ac.jp}}
        \and
        Yoshifumi Nakata \footnote{
              Photon Science Center, Graduate School of Engineering, The University of Tokyo, Bunkyo-ku, Tokyo 113-8656, Japan.
                JST, PRESTO, 4-1-8 Honcho, Kawaguchi, Saitama, 332-0012, Japan.
              \href{nakata@qi.t.u-tokyo.ac.jp}{nakata@qi.t.u-tokyo.ac.jp}}
        \and
        Takayuki Okuda \footnote{
              Department of Mathematics, Hiroshima University, 1-3-1 Kagamiyama, Higashihiroshima, 739-8526, Japan. 
              \href{okudatak@hiroshima-u.ac.jp}{okudatak@hiroshima-u.ac.jp}}
        \and
        Da Zhao \footnote{School of Mathematical Sciences, Shanghai Jiao Tong University, 800 Dongchuan Road, Minhang District, Shanghai 200240, China. \href{jasonzd@sjtu.edu.cn}{jasonzd@sjtu.edu.cn}}
}

\maketitle

\begin{abstract}
The purpose of this paper is to give explicit constructions of unitary $t$-designs in the unitary group $U(d)$ for all $t$ and $d$. 
It seems that the explicit constructions were so far known only for very special cases. 
Here explicit construction means that the entries of the unitary matrices are given by the values of elementary functions at the root of some given polynomials.
We will discuss what are the best such unitary $4$-designs in $U(4)$ obtained by these methods. 

Indeed we give an inductive construction of designs on compact groups by using Gelfand pairs $(G,K)$. 
Note that $(U(n),U(m) \times U(n-m))$ is a Gelfand pair. 
By using the zonal spherical functions for $(G,K)$, we can construct designs on $G$ from designs on $K$.

We remark that our proofs use the representation theory of compact groups crucially. 
We also remark that this method can be applied to the orthogonal groups $O(d)$, and thus provides another explicit construction of spherical $t$-designs on the $d$ dimensional sphere $S^{d-1}$ by the induction on $d$.

\end{abstract}

\maketitle

\section{Introduction}

The aim of design theory is to approximate a space $M$ by a good finite subset $X$. 
There have been numerous studies on spherical designs\cite{MR0485471} and combinatorial designs\cite{MR2246267}. 
The sphere is a canonical continuous space while the combinatorial design is in the discrete space $M = \binom{V}{k}$ of $k$-subsets of $V$. 
The concept of combinatorial design was generalized to designs on $Q$-polynomial association schemes\cite{MR0384310}. 
Other continuous spaces such as projective space, Grassmannian space\cite{MR2033722,MR1941981,MR2577476} have been considered as well. 
In this paper we focus on the construction of unitary designs, which is designs on the unitary group. 

There is an increasing demand for unitary designs in quantum information science that aims to realize information processing based on quantum mechanics, where protocols are described by unitary transformations. Physically implementing unitary transformations is the key to realize information processing in quantum information science. A unitary transformation chosen uniformly at random from the whole unitary group is of particular importance since it is used in many protocols, such as benchmarking quantum devices~\cite{EAZ2005,KLRetc2008}, characterizing quantum systems~\cite{KZD2016,RBSC2004,MR2433437}, improving computational complexity~\cite{BH2013}, and transmitting information~\cite{HOW2007}. However, the resources available in experiments are quite limited and so, it is in general hard to experimentally implement the uniformly random unitary transformation. Thus, mimicking the whole unitary group by unitary designs is attracting much attention.

While numerous constructions of unitary $2$-designs are known~\cite{CLLW2015,DCEL2009,DJ2011,GAE2007,HL2009,NHMW2015-1}, less is known about unitary $t$-designs for $t \geq 3$~\cite{BHH2016,HMHEGR2020,HL2009TPE,HM2018,NHKW2017}. Most of them are approximate ones. Hence, finding explicit constructions of exact unitary $t$-designs on $U(d)$ is highly desired, which will lead to more accurate realizations of quantum information protocols.
Some finite subgroups of the unitary group are unitary designs of small strength\cite{MR4125850,MR2123127,MR2529619}. The Clifford groups on $U(2^n)$ for any $n \in \mathbb{N}$ are exact unitary $3$-designs\cite{Webb:2016:CGF:3179439.3179447,Zhu_2017,1609.08172}. An exact unitary $4$-design on $U(4)$ is constructed in \cite{Bannai_2019}.

The main purpose of this paper is to provide inductive constructions of exact unitary $t$-design on $U(d)$ for arbitrary strength $t$ and dimension $d$. The method can be applied to orthogonal groups as well. As a by-product we have constructions for complex spherical designs and real spherical designs. 

The existence of spherical designs were proved by Seymour-Zaslavsky\cite{MR744857}. 
Bondarenko-Radchenko-Viazovsk\cite{MR3071504} showed that there exist spherical $t$-designs on $S^{d}$ of size at least $c_d t^d$ for some fixed constant $c_d$ with $t$ tending to infinity. 
This lower bound is asymptotically best possible order of magnitude. 
The asymptotically best lower bound for fixed strength $t$ with $d$ tending to infinity is yet unknown. 
The explicit construction of spherical designs is relatively involved. 
Rabau-Bajnok\cite{MR1133060} and Wagner\cite{MR1089385} constructed spherical $t$-designs from interval $t$-designs with Gegenbauer weight. 
Cui-Xia-Xiang\cite{MR3964155} showed the existence of spherical designs over $\mathbb{Q}(\sqrt{p})$ where $p$ is a prime number. 
Later Xiang\cite{Xiang20+} obtained explicit spherical designs. 
The readers can find surveys on spherical designs in \cite{MR2535394,MR3594369}.

Let us now define unitary designs. 
There are several equivalent definitions of unitary $t$-designs. 

\begin{definition}[{\cite[pp. 14-15]{MR2529619}}]
  Let $X$ be a finite subset $X$ of $U(d)$. 
  The following are equivalent. 
  \begin{enumerate}
    \item $X$ is a unitary $t$-design. 
    \item $\frac{1}{\abs{X}} \sum_{U \in X} U^{\otimes t} \otimes (U^\dagger)^{\otimes t} = \int_{U(d)} U^{\otimes t} \otimes (U^\dagger)^{\otimes t} \dd U$.
    \item $\frac{1}{\abs{X}} \sum_{U \in X} f(U) = \int_{U(d)} f(U) \dd U$ for every $f \in \Hom(U(d),t,t)$, the space of polynomials of homogeneous degree $t$ in entries of $U$ and of homogeneous degree $t$ in the entries of $U^\dagger$.
  \end{enumerate}
\end{definition}

Representation theory is used extensively in our construction. 
There is another equivalent definition of unitary $t$-designs by irreducible representations of $U(d)$, which is quite useful for our purpose. 
The irreducible representations of unitary group are characterized by the highest weight.

\begin{theorem}[{\cite[Theorem 25.5]{MR2062813}}] \label{thm:financiery}
  The irreducible representations of unitary group $U(n)$ are indexed by non-increasing integer sequence $\lambda = (\lambda_1, \lambda_2, \ldots, \lambda_n)$ of length $n$. 
\end{theorem}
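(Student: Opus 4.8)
The plan is to deduce this from the Cartan--Weyl theorem of the highest weight for compact connected Lie groups, specialised to $G = U(n)$. First I would fix the maximal torus $T \subset U(n)$ of diagonal unitary matrices $\diag(e^{i\theta_1},\dots,e^{i\theta_n})$ and identify its character lattice: every continuous homomorphism $T \to U(1)$ has the form $\diag(e^{i\theta_1},\dots,e^{i\theta_n}) \mapsto \exp\bigl(i\textstyle\sum_k \lambda_k \theta_k\bigr)$ for a unique $\lambda = (\lambda_1,\dots,\lambda_n) \in \ZZ^n$, so the weight lattice is exactly $\ZZ^n$. Next I would record the root data: the adjoint action of $T$ on the complexified Lie algebra $\mathfrak{gl}_n(\CC)$ has weights $e_i - e_j$ ($i \neq j$) on the matrix units $E_{ij}$, so the root system is $\{\, e_i - e_j : i \neq j \,\}$ and the Weyl group is $S_n$ permuting coordinates. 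Choosing the positive roots to be the $e_i - e_j$ with $i < j$, the closed dominant Weyl chamber inside $\ZZ^n$ is precisely the set of non-increasing sequences $\lambda_1 \geq \lambda_2 \geq \dots \geq \lambda_n$.

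With this set-up in place, I would invoke the theorem of the highest weight: since $U(n)$ is compact and connected, every finite-dimensional continuous irreducible representation $V$ has a distinguished one-dimensional weight space whose weight $\lambda(V)$ (the highest weight) is dominant; the map $V \mapsto \lambda(V)$ is injective on isomorphism classes; and every dominant weight occurs. Combined with the chamber computation above, this yields the claimed bijection between the irreducible representations of $U(n)$ and non-increasing integer sequences of length $n$. I would also recall why $U(n)$ is connected (e.g.\ it is the continuous image of the connected group $SU(n) \times U(1)$ under $(A,z) \mapsto zA$, or directly via path-connectedness), since that is what licenses the highest-weight machinery.

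The main obstacle is the surjectivity half: showing that every non-increasing $\lambda \in \ZZ^n$ is realised as a highest weight. Several routes work. (i) A Cartan-product argument, building up from the standard representation $\CC^n$ with highest weight $(1,0,\dots,0)$, its dual with highest weight $(0,\dots,0,-1)$, and the one-dimensional determinant powers $U \mapsto (\det U)^k$ with highest weight $(k,\dots,k)$, extracting the appropriate summand of a suitable tensor product. (ii) Weyl's explicit construction of the Schur modules $\mathbb{S}_\lambda(\CC^n)$ via Young symmetrisers together with Schur--Weyl duality, which realises all polynomial representations, the general case following by twisting with a power of $\det$. (iii) The Borel--Weil theorem, realising the irreducible of highest weight $\lambda$ as global sections of a homogeneous line bundle on the flag manifold $U(n)/T$. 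Any of these closes the proof; route (ii) is the most self-contained and additionally exhibits the matrix entries of each representation as polynomials in the entries of $U$ and $U^\dagger$, which is exactly the structure exploited later in the design computations. The remaining facts — existence and dominance of highest weights and their separation of irreducibles — are then standard once the torus and root data above are in hand.
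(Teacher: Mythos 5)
Your argument is correct, and it is essentially the standard highest-weight classification that the paper itself does not reprove but simply cites (Bump, Theorem 25.5): the paper's "proof" is that citation, and the cited source proceeds exactly via the maximal torus of diagonal unitaries, the weight lattice $\ZZ^n$, the $S_n$ Weyl group action, and the theorem of the highest weight, with existence realized by constructions of the kind you list. So your proposal matches the intended route; no discrepancies to report.
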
 

We denote by $\lambda^+$ the sum of positive terms in $\lambda_1, \lambda_2, \ldots, \lambda_n$ and by $\lambda^-$ the absolute value of sum of negative terms in $\lambda_1, \lambda_2, \ldots, \lambda_n$. 
And we define $\abs{\lambda} = \lambda^+ - \lambda^-$. 
The following two collections of irreducible representations are used to characterize unitary design.

\begin{align*}
  \blacksquare_n^{s,t} &:= \set{\lambda: \lambda_1 \geq \lambda_2 \geq \cdots \geq \lambda_n, \lambda^+ \leq s, \lambda^- \leq t}. \\
  \diagup_n^{t} &:= \set{\lambda: \lambda_1 \geq \lambda_2 \geq \cdots \geq \lambda_n, \lambda^+ = \lambda^- \leq t} \subset \blacksquare_n^{t,t}.
\end{align*}

\begin{theorem}[{\cite[Theorem 6]{Bannai_2019}}]
  A finite subset $X \subset U(d)$ is a unitary $t$-design if and only if 
  \begin{equation} \label{eqn:workwoman}
    \frac{1}{\abs{X}} \sum_{U \in X} \rho_\lambda(U) = \int_{U(d)} \rho_\lambda(U) \dd U.
  \end{equation}
  for every irreducible representation $\rho_\lambda$ where $\lambda \in \diagup_n^t$.
\end{theorem}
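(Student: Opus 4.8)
The plan is to run everything through parts (2) and (3) of the opening Definition and then reduce to a decomposition of one explicit representation into irreducibles. Set $V=\CC^d$ and let $M_t\colon U\mapsto U^{\otimes t}\otimes(U^\dagger)^{\otimes t}$, a polynomial representation of $U(d)$ on $W_t:=V^{\otimes t}\otimes(V^*)^{\otimes t}$. The matrix entries of $M_t(U)$ are exactly the monomials of bidegree $(t,t)$ in the entries of $U$ and $U^\dagger$, so the span of these entries is $\Hom(U(d),t,t)$; hence by part (3) of the Definition (and linearity of $f\mapsto\frac{1}{\abs{X}}\sum_{U\in X}f(U)-\int_{U(d)}f\dd U$), the set $X$ is a unitary $t$-design if and only if $\frac{1}{\abs{X}}\sum_{U\in X}M_t(U)=\int_{U(d)}M_t(U)\dd U$. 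Now fix a $U$-independent intertwiner putting $M_t$ in block form $\bigoplus_\lambda \rho_\lambda^{\oplus m_\lambda}$; conjugating the previous identity by it makes it block-diagonal, so it holds if and only if $\frac{1}{\abs{X}}\sum_{U\in X}\rho_\lambda(U)=\int_{U(d)}\rho_\lambda(U)\dd U$ for each $\lambda$ with $m_\lambda\geq1$. Thus the whole statement comes down to the representation-theoretic claim that the set of highest weights $\lambda$ occurring in $W_t$ equals $\diagup_d^t$.

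I would prove the inclusion ``occurs in $W_t$'' $\Rightarrow$ ``$\lambda\in\diagup_d^t$'' by a weight count. If $\rho_\lambda$ occurs in $W_t$ then $\lambda$ is a weight of $W_t$, hence $\lambda=a-b$ where $a=(a_1,\dots,a_d)$ and $b=(b_1,\dots,b_d)$ have nonnegative integer entries with $\sum_j a_j=\sum_j b_j=t$ (a weight of $V^{\otimes t}$ is such an $a$, a weight of $(V^*)^{\otimes t}$ is such a $-b$). Then $\sum_j\lambda_j=0$, so $\lambda^+=\lambda^-$, and $\lambda^+=\sum_{j:\,a_j>b_j}(a_j-b_j)\leq\sum_j a_j=t$, giving $\lambda\in\diagup_d^t$.

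For the reverse inclusion I would take $\lambda\in\diagup_d^t$ and split it as $\lambda=(\alpha_1,\dots,\alpha_p,0,\dots,0,-\beta_q,\dots,-\beta_1)$, where $\alpha=(\alpha_1\geq\cdots\geq\alpha_p>0)$ and $\beta=(\beta_1\geq\cdots\geq\beta_q>0)$ are partitions of the common value $s:=\lambda^+=\lambda^-\leq t$ and $p+q\leq d$. By Schur--Weyl duality $\rho_\alpha$ is a constituent of $V^{\otimes s}$ and $(\rho_\beta)^*$ is a constituent of $(V^*)^{\otimes s}$; since the highest weight of a tensor product of irreducibles is the sum of the highest weights, and the dual $(\rho_\beta)^*$ has highest weight $(0,\dots,0,-\beta_q,\dots,-\beta_1)$ (negate and reverse the highest weight of $\rho_\beta$), the irreducible $\rho_\lambda$ is a constituent of $\rho_\alpha\otimes(\rho_\beta)^*$, hence of $V^{\otimes s}\otimes(V^*)^{\otimes s}$. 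Finally $V\otimes V^*$ contains the trivial representation, so $V^{\otimes s}\otimes(V^*)^{\otimes s}$ embeds into $W_t$ as a subrepresentation (tensor with $t-s$ further copies of $V\otimes V^*$); therefore $\rho_\lambda$ occurs in $W_t$, completing the identification.

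The hard part is precisely this last identification of the index set: the reduction in the first paragraph is formal, but one must genuinely show that no $\rho_\lambda$ with $\lambda^+\neq\lambda^-$ or with $\lambda^+>t$ appears (this is what excludes the larger set $\blacksquare_d^{t,t}$) and that every $\lambda\in\diagup_d^t$ really does occur. I would also record the degenerate case $s=0$, where $\rho_\lambda$ is the trivial representation, $\int_{U(d)}\rho_\lambda(U)\dd U$ is the identity, and the corresponding equation is automatic, and note that repeated blocks ($m_\lambda>1$) impose identical constraints and hence cause no trouble in the block-diagonalization step.
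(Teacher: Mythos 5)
Your proposal is correct: the paper itself gives no proof of this statement (it is imported as Theorem 6 of the cited reference), and your route---reducing the design condition to the entrywise identity for $U^{\otimes t}\otimes(U^\dagger)^{\otimes t}$, block-diagonalizing by a fixed intertwiner, and identifying the constituents of $V^{\otimes t}\otimes(V^*)^{\otimes t}$ as exactly the $\rho_\lambda$ with $\lambda^+=\lambda^-\le t$ via the weight estimate in one direction and the Schur--Weyl plus Cartan-component construction in the other---is precisely the standard argument behind that result. The only phrasing to tighten is ``the highest weight of a tensor product of irreducibles is the sum of the highest weights'': what you actually need (and implicitly use correctly) is that the tensor product of the two highest weight vectors is itself a highest weight vector, so the irreducible of highest weight $\mu+\nu$ occurs as a constituent of $\rho_\mu\otimes\rho_\nu$.
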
 
  
Due to the inductive nature of our construction, we will replace $\diagup_n^t$ by $\blacksquare_n^{t,t}$. 
We will also adopt the relaxation from set to multi-set for technical reasons. 

\begin{definition}
    Let $X$ be a finite multi-set on $U(d)$. 
    The following are equivalent. 
    \begin{enumerate}
        \item $X$ is a strong unitary $t$-design on $U(d)$.
        \item $\frac{1}{\abs{X}} \sum_{U \in X} U^{\otimes r} \otimes (U^\dagger)^{\otimes s} = \int_{U(d)} U^{\otimes r} \otimes (U^\dagger)^{\otimes s} \dd U$ for every integers $0 \leq r,s \leq t$.
        \item $\frac{1}{\abs{X}} \sum_{U \in X} f(U) = \int_{U(d)} f(U) \dd U$ for every $f \in \Hom(U(d),r,s)$, the space of polynomials of homogeneous degree $r$ in entries of $U$ and of homogeneous degree $s$ in the entries of $U^\dagger$, for every integers $0 \leq r,s \leq t$. 
        \item $\frac{1}{\abs{X}} \sum_{U \in X} \rho_\lambda(U) = \int_{U(d)} \rho_\lambda(U) \dd U$ for every $\lambda \in \blacksquare_n^{t,t}$.
    \end{enumerate}
\end{definition}

This paper shows the following theorem. 

\begin{theorem*}
    Strong unitary $t$-designs on $U(n)$ can be constructed from strong unitary $t$-designs on $U(m)$ and strong unitary $t$-designs on $U(n-m)$ using the zeroes of zonal spherical functions of the complex Grassmannian $\mathcal{G}_{m,n}$.
\end{theorem*}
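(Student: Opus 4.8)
The plan is to use the theory of Gelfand pairs, applied to $(G,K) = (U(n), U(m)\times U(n-m))$, in order to ``lift'' a design on the subgroup $K$ to a design on $G$. The key representation-theoretic input is that this pair is a Gelfand pair, so that for each irreducible $\rho_\lambda$ of $G$ the space of $K$-fixed vectors is at most one-dimensional, and the associated zonal spherical function $\varphi_\lambda$ on the Grassmannian $\mathcal{G}_{m,n} = G/K$ controls the projection of $\rho_\lambda$ onto the $K$-invariants. Concretely, I would first fix a strong unitary $t$-design $Y_1$ on $U(m)$ and a strong unitary $t$-design $Y_2$ on $U(n-m)$, so that $Y_1 \times Y_2$ is a design on $K$ in the appropriate sense (this needs a short lemma: a product of designs on $U(m)$ and $U(n-m)$ averages correctly against every irreducible of $K$, hence against the restriction $\rho_\lambda|_K$ of every $\rho_\lambda$ with $\lambda \in \blacksquare_n^{t,t}$, because the branching $U(n) \downarrow U(m)\times U(n-m)$ only produces constituents whose ``$\lambda^\pm$'' do not exceed those of $\lambda$).

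Next I would introduce a one-parameter family of double-coset representatives: points of the Grassmannian, or equivalently ``rotation'' matrices $a(\theta)$ interpolating between the two blocks, parametrized by the principal angles $\theta$. The design on $G$ will be built as a union of translates
\[
  X \;=\; \bigcup_{j} \; k\, a(\theta_j)\, k' \quad (k,k' \in K),
\]
with the $\theta_j$ chosen among the zeroes of a suitable zonal spherical function, and with multiplicities/weights coming from a quadrature rule on the Grassmannian. The crucial computation is: for $\lambda \in \blacksquare_n^{t,t}$,
\[
  \int_{K}\!\!\int_{K} \rho_\lambda\!\bigl(k\,a(\theta)\,k'\bigr)\,dk\,dk'
  \;=\; \varphi_\lambda(\theta)\, P_\lambda,
\]
where $P_\lambda$ is the projection onto the $K$-fixed line (zero if there are no $K$-fixed vectors), and $\varphi_\lambda$ is the zonal spherical function. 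Hence averaging $\rho_\lambda$ over $X$ reduces, after using that $Y_1\times Y_2$ already reproduces the $K$-average, to a one-dimensional quadrature problem: we need $\frac{1}{|X|}\sum_j \varphi_\lambda(\theta_j) = \int \varphi_\lambda \, d\mu$ for every relevant $\lambda$, where $\mu$ is the projection of Haar measure on $G$ to the radial variable $\theta$ on $\mathcal{G}_{m,n}$. Since the $\varphi_\lambda$ for $\lambda \in \blacksquare_n^{t,t}$ span a finite-dimensional space of functions of $\theta$ (a space of ``polynomials'' in the Grassmannian variables of bounded degree), this is exactly an interval/Gaussian-type quadrature, and the zeroes of the top zonal spherical function — analogous to Gauss–Jacobi nodes — furnish such a rule. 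For the case $\lambda$ with no $K$-fixed vector the contribution vanishes identically, so nothing is required there.

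Finally I would assemble the pieces: choosing the $\theta_j$ as the quadrature nodes and taking $X = \bigcup_j (Y_1\times Y_2)\, a(\theta_j)\, (Y_1 \times Y_2)$ (a finite multi-set, with the relaxation to multi-sets exactly accommodating repeated group elements arising from the two-sided translation), and then verifying \eqref{eqn:workwoman} in the $\blacksquare_n^{t,t}$ form for every $\lambda$ by the factorization above. To get a genuinely \emph{explicit} construction in the sense of the abstract, one observes that the nodes $\theta_j$ are roots of an explicit polynomial (the zonal spherical function, which for the complex Grassmannian is an explicit Jacobi-type polynomial), and the entries of $a(\theta_j)$ are elementary functions (sines and cosines) of those roots, while $Y_1, Y_2$ are explicit by induction on the dimension, the base case $U(1)$ being roots of unity. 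The main obstacle I anticipate is \emph{two-fold}: (i) proving cleanly that the span of $\{\varphi_\lambda : \lambda \in \blacksquare_n^{t,t}\}$ is precisely the space for which the Grassmannian quadrature must be exact — i.e.\ matching the ``design strength'' $t$ on $U(n)$ with the right ``degree'' on $\mathcal{G}_{m,n}$ — which requires care with the branching rule and the relation between $\lambda^\pm$ and the bidegree; and (ii) guaranteeing that a quadrature rule with the requisite exactness and with nodes at zeroes of zonal spherical functions actually exists with \emph{positive} weights that can be realized by integer multiplicities (or, if real weights are unavoidable, that they can themselves be reproduced by a further design), which is the analogue of the classical Gauss quadrature positivity argument transported to the Grassmannian setting.
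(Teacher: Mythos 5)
You set up the right framework---the Gelfand pair $(U(n),U(m)\times U(n-m))$, the product design $Y_1\times Y_2$ on $K$, the branching observation that forces the ``box'' condition $\blacksquare_n^{t,t}$ (the reason strong designs are the right inductive notion), and the factorization $\int_K\int_K \rho_\lambda(k\,a(\theta)\,k')\,dk\,dk'=Z_\lambda(\theta)\,P_\lambda$ with $P_\lambda$ the projection onto the $K$-fixed line---and up to that point you match the paper. The gap is in how you finish. You reduce the problem to a quadrature rule on the radial part of the Grassmannian that is \emph{simultaneously} exact for all spherical $\lambda\in\blacksquare_n^{t,t}$, with nodes at zeroes of a ``top'' zonal spherical function and with positive weights realizable as integer multiplicities, and you yourself flag that you cannot establish that such a rule exists. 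This is not a technicality: for $m\ge 2$ the radial variable is the $m$-tuple of principal angles, so this is a multivariate cubature problem, not ``exactly an interval/Gaussian-type quadrature,'' and the Gauss--Jacobi picture (zeroes of the top orthogonal polynomial give exactness on the whole space, with positive weights) has no general multivariate counterpart. As written, the existence of your node set is precisely the unproven heart of the argument.

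The paper avoids this entirely through two observations your proposal is missing. First, every relevant $\rho_\lambda$ is a nontrivial irreducible, so $\int_{U(n)}\rho_\lambda(g)\,dg=0$; hence after the two-sided $K$-averaging the only condition on the middle multi-set $\Omega$ is $\sum_{z\in\Omega}Z_\lambda(z)=0$, not the reproduction of an integral by a weighted rule. Since $\int_G Z_\lambda(g)\,dg=0$, the intermediate value theorem gives a zero of $Z_\lambda$ when it is real-valued, and otherwise a zero of $\mathrm{Re}\,Z_\lambda$, in which case $\Omega=\{\omega,\omega^{-1}\}$ works; so a one- or two-point $\Omega$ suffices for each $\lambda$, with no weights, no positivity, and no cubature existence question. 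Second, distinct spherical $\lambda$'s are handled not by one simultaneously exact node set but by the product construction: partition the spherical part of $\blacksquare_n^{t,t}$ into groups $\Upsilon_i$ admitting common zeroes $\Omega_i$, and take $X=Y\cdot\Omega_1\cdot Y\cdot\Omega_2\cdot Y\cdots\Omega_l\cdot Y$; since any product having a $\rho$-design as a factor is again a $\rho$-design, $X$ is a design for the union of the $\Upsilon_i$ (at the price of size exponential in $l$, which is why common zeroes are sought). Replacing your quadrature step by these two ingredients turns your outline into the paper's proof; without them the proposal is incomplete at its decisive step.
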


The paper is organized as follows. 
In \cref{subsection:multi-sets,subsection:repre_and_duals,subsection:Haar_Bochner} the notation for multi-sets, representations and Haar measure are set up. 
In \cref{subsection:ZSF} we introduce Gelfand pairs and zonal spherical functions. 
The central object in this paper, designs on compact groups, are given in \cref{section:designs}. 
We explain our inductive construction in \cref{section:inductive_construction_abstract}. 
In \cref{sec:Schistosoma} we compare different constructions of unitary $4$-designs on $U(4)$. 
We briefly mention how our method gives spherical designs as a by-product in \cref{sec:Liassic}. 
And finally in \cref{sec:swainsona} we discuss the relation between designs in this paper and those in classic design theory. 
The Appendix include the zonal polynomials together with their zeroes.

\section{finite multi-sets on groups}\label{subsection:multi-sets}

Let $G$ be a group.
We use the terminology of 
``non-empty finite multi-sets $X$ on $G$'' in the following sense:
Let $N \in \ZZ_{\geq 1}$.
Each $\mathfrak{S}_N$-orbit in $G^N$ is said to be 
an $N$-point multi-set on $G$,
where $G^N$ denotes the direct product of $N$-copies of $G$ 
and $\mathfrak{S}_N$ the symmetric group of order $N$ acting on $G^N$ as permutations of coordinates.
For the simplicity, the $\mathfrak{S}_N$-orbit of $(x_1,\dots,x_N) \in G^N$ will be denoted by $\{ x_1,\dots,x_N \}_{\mathrm{mult}}$.
Note that $\{ x_1,x_2,\dots,x_N \}_{\mathrm{mult}} \neq \{ x_2,\dots,x_N \}_{\mathrm{mult}}$ even if $x_1 = x_2$.

We also use the following notation:
\begin{itemize}
\item For each $N$-point 
multi-set $X = \{ x_1,\dots,x_N \}_\mathrm{mult}$ on $G$, 
we put $|X| := N$ and 
\[
\sum_{x \in X} \rho(x) := \sum_{i=1}^N \rho(x_i)
\]
for each map $\rho : G \rightarrow W$ where $W$ is an Abelian group.
\item For each $N$-point multi-set $X = \{ x_1,\dots,x_N \}_\mathrm{mult}$ on $G$ 
and each $g \in G$, 
we define $N$-point multi-sets $gX$ and $Xg$ on $G$ by 
\[
gX := \{ gx_1,\dots,g x_N \}_\mathrm{mult} \text{ and } Xg := \{ x_1 g,\dots,x_N g \}_\mathrm{mult},
\]
respectively.
\item Let $G$ and $H$ be both groups.
For an $N$-point multi-set $X = \{ x_1,\dots,x_N \}_\mathrm{mult}$ on $G$ 
and an $M$-point multi-set $Y = \{ y_1,\dots,y_M \}_\mathrm{mult}$ on $H$, 
we define an $NM$-point multi-set $X \times Y$ on $G \times H$ by 
\[
X \times Y := \{ (x_i,y_j) \mid i = 1,\dots,N, ~ j = 1,\dots,M \}_\mathrm{mult}.
\]
\item For each $N$-point and $M$-point multi-sets $X = \{ x_1,\dots,x_{N} \}_\mathrm{mult}$ and $Y = \{ y_1,\dots,y_M \}_\mathrm{mult}$ on $G$, 
we define an $(N + M)$-point multi-set $X \sqcup Y$ and an $NM$-point multi-set $X \cdot Y$ on $G$ respectively by 
\begin{align*}
X \sqcup Y &:= \{ x_1,\dots,x_N,y_1,\dots,y_M \}_\mathrm{mult} \text{ and } \\
X \cdot Y &:= \{ x_iy_j \mid i=1,\dots,N,j=1,\dots,M \}_\mathrm{mult}.
\end{align*}
It should be noted that $X \sqcup Y = Y \sqcup X$ but $X \cdot Y$ is not needed to be equals to $Y \cdot X$ in general.
\item For an ordered finite family of non-empty finite multi-sets $\{ X_i \}_{i = 1,\dots,l}$ on $G$, 
we define 
the non-empty finite multi-set $\bigsqcup_{i=1}^l X_i$ on $G$ inductively by 
\begin{align*}
\bigsqcup_{i=1}^1 X_i &:= X_1, \\
\bigsqcup_{i=1}^{k+1} X_i &:= \left( \bigsqcup_{i=1}^{k} X_i \right) \sqcup X_{k+1} \quad (\text{ for } k = 1,\dots,l-1).
\end{align*}
The non-empty finite multi-set $\prod_{i=1}^l X_i$ on $G$ is also defined inductively as below:
\begin{align*}
\prod_{i=1}^1 X_i &:= X_1, \\
\prod_{i=1}^{k+1} X_i &:= \left( \prod_{i=1}^{k} X_i \right) \cdot X_{k+1} \quad (\text{ for } k = 1,\dots,l-1).
\end{align*}
Note that ``the associativity law'' holds in a natural sense for such the product.
\end{itemize}

\section{Finite-dimensional representations and unitary duals of compact groups}\label{subsection:repre_and_duals}

Let $G$ be a compact Hausdorff (topological) group.
In this subsection, we fix our terminologies for 
finite-dimensional representations of $G$ and the unitary dual $\widehat{G}$ of $G$.

A pair $(\rho,V)$ is said to be a finite-dimensional (complex) $G$-representation 
if 
$V$ is a finite-dimensional complex vector space equipped with the standard topology and 
$\rho : G \rightarrow \GL_\CC(V)$ is a group homomorphism satisfying that 
the map
\[
G \times V \rightarrow V, ~ (g,v) \mapsto gv
\]
is continuous, 
where $\GL_\CC(V)$ denotes the group of all bijective complex linear transformations on $V$.
A finite-dimensional $G$-representation $(\rho,V)$ is called trivial 
if $\rho(g) = \mathrm{id}_V$ for any $g \in G$.

We say that a $\CC$-subspace $W$ of $V$ is $G$-stable   
if $\rho(g)w \in W$ for any $g \in G$ and $w \in W$.
For a $G$-stable subspace $W$ of $V$, 
$\rho$ induces a $G$-representation 
\[
\tau : G \rightarrow \GL_\CC(W), ~ g \mapsto \tau(g) := (\rho(g))|_{W}.
\]
Such the $G$-representation $(\tau,W)$ is called a subrepresentation of $(\rho,V)$.

A finite-dimensional $G$-representation $(\rho,V)$ is called irreducible if 
$V \neq 0$ and there does not exist $G$-stable subspaces of $V$ except for $0$ or $V$ itself.
It is well-known that any finite-dimensional $G$-representation $(\rho,V)$
is completely reducible,
that is, 
for any $G$-stable subspace $W$ of $V$, 
there exists a $G$-stable subspace $W'$ of $V$ with $V = W \oplus W'$.
In particular, any finite-dimensional $G$-representation can be written as a direct sum of finite number of irreducible $G$-representations.

Let $(\rho,V)$ and $(\tau,W)$ be both finite-dimensional $G$-representations.
A $\CC$-linear map $\theta : V \rightarrow W$ is called $G$-intertwining if $\theta$ is commutative to the $G$-actions, 
that is, $\theta \circ \rho(g) = \tau(g) \circ \theta$ for any $g \in G$.
Two finite-dimensional $G$-representations $(\rho,V)$ and $(\tau,W)$ are said to be equivalent if there exists a bijective $G$-intertwining map $\theta : V \rightarrow W$.

Let us consider the case where a Hermitian inner-product on $V$ is given.
Then a finite-dimensional $G$-representation $(\rho,V)$ is said to be unitary if 
$\rho(g)$ preserves the fixed Hermitian inner-product on $V$ for any $g \in G$.
Two finite-dimensional unitary $G$-representations $(\rho,V)$ and $(\tau,W)$ are said to be unitary equivalent if there exists a bijective $G$-intertwining isometric map $\theta : V \rightarrow W$.

Throughout this paper, 
we shall use the terminology of 
the ``unitary dual'' $\widehat{G}$ of $G$
as a complete set of representatives 
for unitary equivalence classes of 
the collection of all irreducible 
finite-dimensional unitary $G$-representations,
that is, 
$\widehat{G}$ is 
a set of irreducible finite-dimensional unitry $G$-representations 
satisfying the following two conditions:
\begin{description}
\item[Condition (1):] For any irreducible finite-dimensional unitary $G$-representation $(\tau,W)$, there exists $(\rho,V_\rho) \in \widehat{G}$ such that $(\tau,W)$ is unitary equivalent to $(\rho,V_\rho)$. 
\item[Condition (2):] Any distinct two elements in $\widehat{G}$ are never unitary equivalent to each other.
\end{description}
Note that all abstract arguments in this paper does not depend on the choice of $\widehat{G}$.

In the theorey of representations of compact groups, 
the following fundamental facts are well-known:
\begin{itemize}
\item Any finite-dimensional $G$-representation $(\rho,V)$ is unitarizable, 
that is, there exists a Hermitian inner-product on $V$ preserved by $\rho(g)$ for any $g \in G$.
\item Two irreducible unitary $G$-representations are unitary equivalent to each other if and only if they are equivalent to each other.
\end{itemize}

By the facts mentioned above, 
one can consider $\widehat{G}$ 
as a complete set of representatives for
for equivalence classes of 
the collection of finite-dimensional irreducible $G$-representations.

\begin{remark}
In this paper, we only deal with finite-dimensional representations.
It should be noted that every irreducible unitary representation of $G$ is known to be of finite-dimension (see \cite[Section 5.1]{Folland95AbstractHA} for the details).
Therefore, our $\widehat{G}$ can be considered as a 
a complete set of representatives 
for equivalence classes of 
the collection of all irreducible unitary $G$-representations.
\end{remark}

\section{Haar measures on compact groups}\label{subsection:Haar_Bochner}

Let $G$ be a compact Hausdorff group.
It is well-known 
that there uniquely exists a Radon measure $\mu$ on $G$ 
satisfying the following two condisions (see \cite[Sections 2.3 and 2.4]{Folland95AbstractHA} 
for more details):
\begin{description}
\item[Condition (1):] Our $\mu$ is a probability measure, that is, $\mu(G) = 1$.
\item[Condition (2):] Our $\mu$ is two-sided invariant, 
that is, for any Borel set $E$ of $G$ and any $g \in G$
the equality below holds:
\[
\mu(E) = \mu(gE) = \mu(Eg).
\]
\end{description}
Throughout this paper, we call such the measure $\mu$ the probability (two-sided) Haar measure on $G$,
and denote simply by 
\[
\int_G f(g) \dd g \in \CC
\]
the integration of 
a $\CC$-valued $L^1$-integrable function $f$ on $G$ 
with respect to the probability Haar measure $\mu$.
Note that any continuous $\CC$-valued functions on $G$ is $L^1$-integrable with respect to $\mu$.

Let $W$ be a finite-dimensional complex vector space equipped with the standard topology.
For each continuous map $\rho : G \rightarrow W$, 
one can easily see that 
there uniquely exists a vector 
\[
\int_{G} \rho(g) \dd g \in W,
\]
which is called the Bochner integration of $\rho$ in $W$, 
such that for any $\CC$-linear functional $\eta : W \rightarrow \CC$, 
the equality below holds:
\[
\lrangle*{\int_{G} \rho(g) \dd g, \eta} = \int_{G} \langle \rho(g), \eta \rangle \dd g,
\]
where $\langle \cdot, \cdot \rangle$ denotes the pairing on $W \otimes W^\vee$.
Note that $g \mapsto \langle \rho(g), \eta \rangle$ defines a continuous function on $G$
and thus the right hand side is well-defined. 
One may check \cite[Appendix 4]{Folland95AbstractHA} or \cite{MR0492147} for more information on Bochner integral.

In \cref{section:designs}, 
we consider the Bochner integration 
\[
\int_{G} \rho(g) \dd g \in \End_\CC(V)
\]
for each finite-dimensional $G$-representation $(\rho,V)$ 
by considering the continuous map 
\[
\rho : G \rightarrow \GL_\CC(V) \subset \End_\CC(V).
\]
Then the Bochner integration can be characterized as 
a unique operator in $\End_\CC(V)$ with 
\[
\left( \int_{G} \rho(g) \dd g \right) (v) = \int_{G} (\rho(g)v) \dd g
\]
where the right hand side is defined as the Bochner integration of the continuous map 
\[
G \rightarrow V, ~ g \mapsto \rho(g)v.
\]

The following well-known fact will be applied throughout this paper, 
\begin{fact}\label{fact:Bochner_orthogonal_proj}
Let us assume that $(\rho,V)$ is a finite-dimensional unitary $G$-representation.
Then the operator  
\[
A = \int_{G} \rho(g) \dd g \in \End_\CC(V)
\]
is the orthogonal projection of $V$ onto $V^G := \{ v \in V \mid \rho(g)v = v \}$ since $A^2 = A = A^*$.
In particular, 
\[
\int_{G} (\rho(g)v) \dd g \in V^G
\]
holds for any $v \in V$.
\end{fact}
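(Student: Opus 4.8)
The plan is to reduce the statement to three facts: $A^2 = A$, $A^* = A$, and that the image of $A$ equals $V^G$. Once these hold, the assertion that $A$ is the orthogonal projection onto $V^G$ is pure linear algebra (a self-adjoint idempotent is the orthogonal projection onto its range), and the ``in particular'' clause is just the reading $\int_G \rho(g)v\dd g = Av \in V^G$.

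First I would establish the absorption identity $\rho(h)A = A\rho(h) = A$ for every $h \in G$. Since $\rho(h) \in \End_\CC(V)$ is a \emph{fixed} linear operator, the characterization of the Bochner integral in \cref{subsection:Haar_Bochner} lets me move it through the integral, after which I use that $\rho$ is a homomorphism and that the Haar measure is left-invariant:
\[
\rho(h)A = \int_G \rho(h)\rho(g)\dd g = \int_G \rho(hg)\dd g = \int_G \rho(g)\dd g = A,
\]
with $A\rho(h)=A$ proved identically from right-invariance. Absorption then gives $A^2=A$ at once: pulling one copy of $A$ inside and using $A\rho(g)=A$ yields $A^2 = \int_G A\rho(g)\dd g = \int_G A\dd g = A$, the last step because $\mu$ is a probability measure. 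It also identifies the range: absorption forces $\rho(h)(Av)=Av$, so every $Av$ lies in $V^G$; conversely $v\in V^G$ gives $Av=\int_G \rho(g)v\dd g=\int_G v\dd g=v$, so $A$ fixes $V^G$ pointwise. Hence the image of $A$ is exactly $V^G$.

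The remaining identity $A^*=A$ is where unitarity enters and where the only genuinely analytic point sits. In a fixed orthonormal basis the coordinate functionals give $A_{ij}=\int_G \rho(g)_{ij}\dd g$; conjugating through the scalar integral and using $\rho(g)^*=\rho(g^{-1})$ (unitarity) gives
\[
(A^*)_{ij}=\overline{A_{ji}}=\int_G \overline{\rho(g)_{ji}}\dd g=\int_G (\rho(g)^*)_{ij}\dd g,
\]
so $A^*=\int_G \rho(g)^{-1}\dd g=\int_G \rho(g^{-1})\dd g$. To finish I would invoke inversion-invariance $\int_G f(g^{-1})\dd g=\int_G f(g)\dd g$: the push-forward of $\mu$ under $g\mapsto g^{-1}$ is again a probability measure that is two-sided invariant (this is exactly where the two-sidedness of $\mu$ is used), so it equals $\mu$ by uniqueness of the Haar measure, whence $A^*=\int_G \rho(g)\dd g=A$.

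The hard part is precisely this inversion-invariance, i.e.\ the unimodularity of the compact group $G$; it is the one ingredient not reducible to formal manipulation of the integral together with absorption. (One could instead bypass it by showing directly that $(V^G)^\perp \subseteq \ker A$ from unitarity, which also forces orthogonality of the projection; but following the ``$A^2=A=A^*$'' route as phrased, the inversion-invariance of $\mu$ is unavoidable.) Assembling $A^2=A=A^*$ with the range computation then completes the proof.
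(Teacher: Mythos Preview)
Your proof is correct. The paper itself does not prove this statement: it is presented as a ``well-known fact'' (\cref{fact:Bochner_orthogonal_proj}) and is simply invoked without argument. Your route via the absorption identity $\rho(h)A=A\rho(h)=A$, the consequent idempotence and range computation, and the self-adjointness via inversion-invariance of the Haar measure is the standard textbook argument and fills the gap the paper leaves.
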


\section{Gelfand pairs and zonal spherical functions}\label{subsection:ZSF}

Let $G$ be a compact Hausdorff group and $K$ a closed subgroup of $G$.
We say that $(\rho,V_\rho) \in \widehat{G}$ is $K$-spherical if 
\[
V_\rho^K := \{ v \in V_\rho \mid \rho(k)v = v \text{ for any } k \in K \} \neq 0, 
\]
and we let
\[
\widehat{G}_K := \{ (\rho,V_\rho) \in \widehat{G} \mid (\rho,V_\rho) \text{ is } K\text{-spherical} \}.
\]
The pair $(G,K)$ is said to be a compact Gelfand pair if 
\[
\dim_\CC V_\rho^{K} = 1
\]
for any $(\rho,V_\rho) \in \widehat{G}_K$.

\begin{example}[{\cite[Proposition 8.1.3]{MR2328043} and \cite[Theorem 5.1]{TakeuchiModern} }]
Any compact symmetric pair is a compact Gelfand pair.
\begin{enumerate}
\item $(U(n),U(k) \times U(n-k))$.
\item $(O(n),O(n-1))$.
\item $(G_0 \times G_0,\diag G_0)$.
\end{enumerate}
\end{example}

\begin{example}[{\cite[Example 2.1]{MR882540}}]
Let us assume that $G,K$ are both finite.
Then $(G,K)$ is a Gelfand pair if and only if the Schurian scheme corresponding to $(G,K)$ is commutative as an association scheme.
\end{example}

Let $(G,K)$ be a compact Gelfand pair.
A continuous $\CC$-valued function $f$ on $G$ is said to be bi-$K$-invariant
if $f(k_1 x k_2) = f(x)$ for any $x \in G$ and any $k_1,k_2 \in K$.
For each $(\rho,V_\rho) \in \widehat{G}_K$, 
by taking a unit $K$-invariant vector $v \in V_\rho^K$,
we define the continuous bi-$K$-invariant function 
\[
Z^{(G,K)}_\rho : G \rightarrow \CC, ~ \omega \mapsto \langle v,\rho(\omega)v \rangle_{\rho},
\]
where $\langle \cdot,\cdot \rangle_\rho$ denotes the fixed $G$-invariant Hermitian inner-product on $V_\rho$.
One can easily see that $Z^{(G,K)}_\rho$ does not depend on the choice of the unit $K$-invariant vector $v \in V_\rho^K$.
We call $Z^{(G,K)}_\rho \in C(G,K)$ the normalized zonal spherical function for $(\rho,V_\rho) \in \widehat{G}_K$.
Note that our ``normalization'' means that $Z^{(G,K)}_\rho(e_G) = 1$, where $e_G$ denotes the unit of $G$.

By the definition, one can easily observe the following proposition:

\begin{proposition}\label{proposition:inv_conj}
The equality 
\[
Z^{(G,K)}_\rho(\omega^{-1}) = \overline{Z^{(G,K)}_\rho(\omega)}
\]
holds for any $\omega \in G$ and any $\rho \in \widehat{G}_K$.
\end{proposition}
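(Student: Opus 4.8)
The plan is to unwind the definition of the normalized zonal spherical function and then exploit just two facts: that $\rho(\omega)$ is unitary with respect to the fixed Hermitian form $\langle\cdot,\cdot\rangle_\rho$, and that this form is conjugate-symmetric. Fix $\rho \in \widehat{G}_K$ and a unit $K$-invariant vector $v \in V_\rho^K$, so that by definition $Z^{(G,K)}_\rho(\omega) = \langle v, \rho(\omega)v\rangle_\rho$ for every $\omega \in G$. Since, as recalled in \cref{subsection:repre_and_duals}, $\langle\cdot,\cdot\rangle_\rho$ is $G$-invariant, each operator $\rho(g)$ is unitary for it, so its adjoint (taken with respect to $\langle\cdot,\cdot\rangle_\rho$) satisfies $\rho(g)^{*} = \rho(g)^{-1} = \rho(g^{-1})$. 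This is the only structural input required.

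With that in hand the computation is a short chain: for any $\omega \in G$,
\begin{align*}
Z^{(G,K)}_\rho(\omega^{-1})
&= \langle v, \rho(\omega^{-1})v\rangle_\rho
= \langle v, \rho(\omega)^{*}v\rangle_\rho \\
&= \langle \rho(\omega)v, v\rangle_\rho
= \overline{\langle v, \rho(\omega)v\rangle_\rho}
= \overline{Z^{(G,K)}_\rho(\omega)},
\end{align*}
where the second equality uses $\rho(\omega^{-1}) = \rho(\omega)^{*}$, the third is the defining property of the adjoint, and the fourth is the conjugate-symmetry of the Hermitian inner product. Since both sides are independent of the chosen unit $K$-invariant vector $v$ (as already noted immediately after the definition of $Z^{(G,K)}_\rho$), this establishes the identity for all $\omega \in G$.

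I do not anticipate any genuine obstacle: the statement follows directly from unitarizability of finite-dimensional representations of compact groups and the axioms of a Hermitian form. The only point to state with care is the adjoint convention, so that the complex conjugation ends up on the correct side of the pairing; everything else is bookkeeping.
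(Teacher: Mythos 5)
Your argument is correct and is precisely the definitional computation the paper has in mind: the paper states this proposition as an immediate observation from the definition of $Z^{(G,K)}_\rho$ and gives no written proof, and your chain (unitarity of $\rho(\omega)$ for the $G$-invariant form, adjoint equals $\rho(\omega^{-1})$, conjugate symmetry) is the standard way to make that observation explicit. No gaps.
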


The proposition below for zeroes of $Z^{(G,K)}_\rho$ in $G$ 
will be needed in \cref{section:inductive_construction_abstract}.

\begin{proposition}\label{proposition:zero_zonal}
Let us fix a non-trivial $K$-spherical irreducible unitary $G$-representation $\rho \in \widehat{G}_K$.
\begin{enumerate}
\item If $Z_\rho^{(G,K)}$ is real valued on $G$, then $Z_\rho^{(G,K)}$ has some zeroes in $G$.
\item Even if $Z_\rho^{(G,K)}$ has no zero point in $G$, 
the real part 
\[
\Re Z_\rho^{(G,K)} : G \rightarrow \RR, ~ \omega \mapsto \Re (Z_\rho^{(G,K)}(\omega)),
\]
has some zeroes in $G$.
\end{enumerate}
\end{proposition}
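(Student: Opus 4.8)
The plan is to base the whole argument on a single computation — that the $G$-average of $Z_\rho^{(G,K)}$ vanishes — and then to run an intermediate-value argument on the connected group $G$. So first I would prove that $\int_G Z_\rho^{(G,K)}(\omega)\dd\omega=0$. Let $v\in V_\rho^K$ be the unit $K$-invariant vector used to define $Z_\rho^{(G,K)}$, and set $A:=\int_G\rho(\omega)\dd\omega\in\End_\CC(V_\rho)$. By \cref{fact:Bochner_orthogonal_proj}, $A$ is the orthogonal projection of $V_\rho$ onto $V_\rho^G$. Since $V_\rho^G$ is a $G$-stable subspace, irreducibility of $\rho$ forces $V_\rho^G\in\{0,V_\rho\}$; and $V_\rho^G=V_\rho$ would mean $\rho(g)=\mathrm{id}_{V_\rho}$ for every $g$, i.e.\ $\rho$ trivial, which is excluded. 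Hence $V_\rho^G=0$ and $A=0$. Using the characterization of the Bochner integral recalled in \cref{subsection:Haar_Bochner} (so that $Av=\int_G\rho(\omega)v\,\dd\omega$, and a continuous $\CC$-linear functional passes through the integral — applied to $\omega\mapsto\rho(\omega)v$ and to the functional $w\mapsto\langle w,v\rangle_\rho$, and conjugating, so that the question of which slot is linear is harmless) one gets
\[
\int_G Z_\rho^{(G,K)}(\omega)\dd\omega=\int_G\langle v,\rho(\omega)v\rangle_\rho\dd\omega=\langle v,Av\rangle_\rho=0 .
\]

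Next I would put $f:=\Re Z_\rho^{(G,K)}\colon G\to\RR$ and locate a zero of it. It is continuous, $f(e_G)=\Re(1)=1>0$ by the normalization $Z_\rho^{(G,K)}(e_G)=1$, and $\int_G f\dd\omega=\Re\bigl(\int_G Z_\rho^{(G,K)}\dd\omega\bigr)=0$ by the first step. If $f\ge 0$ on all of $G$, then — since the probability Haar measure on a compact group has full support — a continuous nonnegative function with vanishing integral must be identically $0$, contradicting $f(e_G)=1$; so $f(\omega_1)<0$ for some $\omega_1\in G$. Since $G$ is connected and $f$ is continuous with both a positive and a negative value, $f$ must vanish somewhere, for otherwise $f^{-1}((0,\infty))$ and $f^{-1}((-\infty,0))$ would split $G$ into two nonempty open pieces. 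This proves part (2); and when $Z_\rho^{(G,K)}$ happens to be real valued we have $f=Z_\rho^{(G,K)}$, so the zero just produced is a zero of $Z_\rho^{(G,K)}$ itself, which is part (1).

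In truth there is no serious obstacle here: the only substantive input is $\int_G Z_\rho^{(G,K)}\dd\omega=0$, and that is immediate from \cref{fact:Bochner_orthogonal_proj}, everything else being the intermediate value theorem. The one point that does need care is the appeal to connectedness of $G$: it cannot simply be dropped (for $G=\ZZ/2$ with $K=\{e\}$ the sign character is a nontrivial real-valued zonal spherical function with no zero), but connectedness is available in every case relevant to this paper — $U(n)$ for the complex Grassmannians, and $SO(n)$ for the sphere realized as $S^{n-1}=SO(n)/SO(n-1)$ — so I would either add that hypothesis explicitly or restrict attention to the identity component.
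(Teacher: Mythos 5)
Your proof is correct and takes essentially the same route as the paper: the paper simply cites the fact that $\int_G Z_\rho^{(G,K)}(g)\,\dd g = 0$ for non-trivial $\rho$ (which you rederive from \cref{fact:Bochner_orthogonal_proj} and irreducibility) and then concludes by the intermediate value theorem, exactly as you do. Your remark on connectedness is well taken --- the paper's one-line appeal to the intermediate value theorem implicitly assumes it, and your $\ZZ/2$ example shows the hypothesis is genuinely needed (for the disconnected case such as $O(d)$ one must instead run the argument on the connected double coset space $K\backslash G/K$) --- but apart from making this hypothesis explicit, the two arguments coincide.
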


\begin{proof}[Proof of \cref{proposition:zero_zonal}]
It is well-known that 
\[
\int_{G} Z_\rho^{(G,K)}(g) \dd g = 0
\]
for any non-trivial $\rho \in \widehat{G}_K$
(see \cite[Section 1.1]{TakeuchiModern}).
Therefore, our claim follows from the intermediate value theorem on $\RR$.
\end{proof}

\section{Designs on compact groups}\label{section:designs}

Let $G$ be a compact Hausdorff group equipped with the probability Haar measure.
In this section, we give a definition and study some basic properties of designs on $G$ for finite-dimensional $G$-representations.

\subsection{Definitions and characterizations of designs on compact groups}\label{subsection:definition_designs}

For each finite-dimensional $G$-representaion $\rho$, 
we give a definition of ``$\rho$-designs'' on $G$ as follows:

\begin{definition}
Let $(\rho,V)$ be a finite-dimensional $G$-representation.
A non-empty finite multi-set $X$ on $G$ is said to be a $\rho$-design on $G$ if 
\[
\frac{1}{|X|} \sum_{x \in X} \rho(x) = \int_{G} \rho(g) \dd g 
\]
in the vector space $\End_\CC(V)$ (see \cref{subsection:Haar_Bochner,subsection:multi-sets} for the notation).
Furthermore, for each collection $\Lambda$ of finite-dimensional $G$-representations, 
a non-empty finite multi-set $X$ on $G$ is called a $\Lambda$-design on $G$ 
if $X$ is a $\rho$-design on $G$ for any $\rho \in \Lambda$.
\end{definition}

It should be noted that the concept of $\rho$-designs on $G$
 depends only on the equivalent classes of $\rho$,
that is, 
if $\rho$, $\rho'$ are equivalent as finite-dimensional $G$-representations, 
then the concept of $\rho$-designs on $G$ and that of $\rho'$-designs on $G$ coincide with each other.

We also observe that the concept of designs on $G$ is invariant by natural $(G \times G)$-actions on $G$ in the following sense:
Let $\rho$ be a finite-dimensional representation of $G$,
and $X$ a $\rho$-design on $G$.
Then for any $g \in G$, the non-empty finite multisets $Xg$ and $gX$ are both $\rho$-designs on $G$.
In particular, for any collection $\Lambda$ of finite-dimensional representations of $G$, any $\Lambda$-design $X$ on $G$ and any $g \in G$, 
the non-empty finite multisets $Xg$ and $gX$ are both $\Lambda$-designs on $G$.

One can also easily see that if we have $\rho$-designs $X_1$ and $X_2$ on $G$, 
then $X_1 \sqcup X_2$ is also a $\rho$-design on $G$.
In particular, we obtain the following claim which will be applied in \cref{subsection:ind_const}:

\begin{proposition}\label{proposition:product_of_designs}
Let $X$ be a $\rho$-design on $G$
for a finite-dimensional $G$-representations $\rho$,
and fix non-empty finite multi-sets $Y$ and $Z$ on $G$.
Then the non-empty finite multi-set $Y \cdot X \cdot Z$ on $G$ is also a $\rho$-design on $G$.
\end{proposition}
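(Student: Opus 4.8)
The plan is to reduce the claim about $Y \cdot X \cdot Z$ to the already-noted facts that $\rho$-designs are closed under the two-sided $(G\times G)$-action and under disjoint union $\sqcup$. First I would observe that $Y \cdot X \cdot Z$ is, by the definition of the product $\cdot$ of multi-sets in \cref{subsection:multi-sets}, the disjoint union, over all pairs $(y,z)$ with $y \in Y$ and $z \in Z$, of the multi-sets $y X z$. More precisely, writing $Y = \{y_1,\dots,y_M\}_{\mathrm{mult}}$ and $Z = \{z_1,\dots,z_L\}_{\mathrm{mult}}$, the associativity of $\cdot$ gives
\[
Y \cdot X \cdot Z = \bigsqcup_{i=1}^{M} \bigsqcup_{j=1}^{L} \lrpar{y_i X z_j},
\]
where $y_i X z_j := y_i (X z_j) = (y_i X) z_j$ denotes the left-and-right translate of $X$ by $y_i$ and $z_j$ (this is an instance of the notation $gX$ and $Xg$, applied iteratively). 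So the one computational point to check is this multi-set identity; it follows by unwinding the definitions of $\cdot$ and of the iterated $\bigsqcup$.

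Next I would invoke the invariance of the $\rho$-design property under the $(G\times G)$-action: since $X$ is a $\rho$-design on $G$, so is $Xz_j$ for each $j$, hence so is $y_i(X z_j)$ for each $i,j$. Concretely, this is because $\frac{1}{|X|}\sum_{x\in X}\rho(x) = \int_G \rho(g)\dd g$ and the Haar measure is two-sided invariant, so $\frac{1}{|X|}\sum_{x\in X}\rho(y_i x z_j) = \rho(y_i)\lrpar{\int_G\rho(g)\dd g}\rho(z_j) = \int_G \rho(y_i g z_j)\dd g = \int_G\rho(g)\dd g$. Then I would apply the closure under $\sqcup$: a finite disjoint union of $\rho$-designs is again a $\rho$-design, because averaging the equal averages over the pieces (with the pieces here all having the same size $|X|$, though equal sizes are not even needed) reproduces the Haar integral. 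Iterating this over the $ML$ pieces $y_i X z_j$ gives that $Y\cdot X\cdot Z$ is a $\rho$-design, which is the claim.

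There is no real obstacle here; the statement is essentially a bookkeeping consequence of the two invariance properties that the text has already established just above the proposition. The only place one must be slightly careful is the multi-set manipulation: one should note that $y_i X z_j$ is being treated as an $|X|$-point multi-set (so that equal group elements arising from collisions are kept with multiplicity), and that the iterated $\bigsqcup$ indeed produces $ML\cdot|X|$ points matching $|Y\cdot X\cdot Z| = |Y|\,|X|\,|Z|$. Since the text explicitly warns that multi-set disjoint union and product track multiplicities and that associativity holds "in a natural sense," this is routine. I would therefore write the proof in three short steps: (i) the multi-set decomposition of $Y\cdot X\cdot Z$ into translates of $X$; (ii) each translate $y_i X z_j$ is a $\rho$-design by $(G\times G)$-invariance; (iii) conclude by closure of $\rho$-designs under $\sqcup$.
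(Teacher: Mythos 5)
Your proposal is correct and follows essentially the same route as the paper, which states this proposition as an immediate consequence of the two observations made just above it (invariance of $\rho$-designs under the two-sided $(G\times G)$-translations and closure under $\sqcup$). Your write-up merely spells out the multi-set decomposition $Y\cdot X\cdot Z=\bigsqcup_{i,j} y_i X z_j$ and the Haar-invariance computation explicitly, which is fine.
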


The following proposition is also needed in \cref{section:inductive_construction_abstract}:

\begin{proposition}\label{proposition:inv_designs}
Let $(\rho,V)$ be a finite-dimensional $G$-representation
and $X$ a non-empty finite multi-set on $G$.
The following two conditions on $X$ are equivalent:
\begin{description}
\item[Condition (i):] $X$ is a $\rho$-design on $G$.
\item[Condition (ii):] $X^{-1}$ is a $\rho$-design on $G$.
\end{description}
\end{proposition}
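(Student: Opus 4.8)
The plan is to reduce both conditions to the single fact that $A := \int_G \rho(g)\dd g \in \End_\CC(V)$ is self-adjoint once $(\rho,V)$ has been equipped with an invariant inner product. So the first step is to \emph{unitarize}: since $G$ is compact, $(\rho,V)$ is unitarizable, so I fix a $G$-invariant Hermitian inner product on $V$. This is harmless because the defining equation of a $\rho$-design, $\frac{1}{|X|}\sum_{x\in X}\rho(x) = \int_G\rho(g)\dd g$, makes no reference to any inner product on $V$. With respect to the chosen inner product every $\rho(g)$ is unitary, hence $\rho(g^{-1}) = \rho(g)^{-1} = \rho(g)^*$ for all $g\in G$.

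Writing $X = \{x_1,\dots,x_N\}_{\mathrm{mult}}$ and $X^{-1} := \{x_1^{-1},\dots,x_N^{-1}\}_{\mathrm{mult}}$, so that $|X^{-1}| = |X| = N$, and using that $A \mapsto A^*$ distributes over finite sums in $\End_\CC(V)$, one gets
\[
\frac{1}{|X^{-1}|}\sum_{y\in X^{-1}}\rho(y)
\;=\; \frac{1}{N}\sum_{i=1}^{N}\rho(x_i^{-1})
\;=\; \frac{1}{N}\sum_{i=1}^{N}\rho(x_i)^*
\;=\; \left(\frac{1}{N}\sum_{i=1}^{N}\rho(x_i)\right)^{\!*}
\;=\; \left(\frac{1}{|X|}\sum_{x\in X}\rho(x)\right)^{\!*}.
\]
Next I invoke \cref{fact:Bochner_orthogonal_proj}: since $(\rho,V)$ is now unitary, $A = \int_G \rho(g)\dd g$ is the orthogonal projection of $V$ onto $V^G$, so in particular $A = A^*$. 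Note also that the right-hand side of the defining equation of a $\rho$-design is the same operator $A$ whether one considers $X$ or $X^{-1}$, so no inversion-invariance of the Haar measure is needed.

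Combining these observations with the display: $X^{-1}$ is a $\rho$-design if and only if $\bigl(\frac{1}{|X|}\sum_{x\in X}\rho(x)\bigr)^* = A$; taking adjoints on both sides and using $A = A^*$, this is equivalent to $\frac{1}{|X|}\sum_{x\in X}\rho(x) = A$, which is exactly the statement that $X$ is a $\rho$-design. This establishes Condition (i) $\Leftrightarrow$ Condition (ii). There is essentially no real obstacle in this argument; the only delicate points worth stating carefully are that passing to a $G$-invariant inner product does not alter the notion of $\rho$-design (so that the unitarity identity $\rho(g^{-1}) = \rho(g)^*$ may be used freely) and that the adjoint distributes over the finite sum defining the empirical average.
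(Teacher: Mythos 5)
Your proof is correct and follows essentially the same route as the paper: unitarize with a $G$-invariant inner product, observe that the empirical average over $X^{-1}$ is the adjoint of that over $X$, note that $\int_G \rho(g)\dd g$ is the self-adjoint orthogonal projection onto $V^G$ (\cref{fact:Bochner_orthogonal_proj}), and conclude by taking adjoints. Your explicit remark that the design condition is inner-product independent is a fine clarification but does not change the argument.
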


\begin{proof}[Proof of \cref{proposition:inv_designs}]
Recall that any finite-dimensional $G$-representation is unitarizable.
We take a $G$-invariant Hermitian inner-product $\lrangle{\cdot,\cdot}$ on $V$.
Then the operator 
\[
\int_{G} \rho(g) \dd g
\]
is the orthogonal projection of $V$ onto $V^G$,
and hence it is self-adjoint with respect to $\lrangle{\cdot,\cdot}$.
We also observe that for each $g \in G$, 
the operator $\rho(g^{-1})$ is the adjoint of $\rho(g)$ on $V$
with respect to $\lrangle{\cdot,\cdot}$.
In particular, the operator 
\[
\frac{1}{|X|} \sum_{x \in X} \rho(x^{-1})
\]
is the adjoint of 
\[
\frac{1}{|X|} \sum_{x \in X} \rho(x).
\]
Therefore, 
the equality 
\[
\frac{1}{|X|} \sum_{x \in X} \rho(x) = \int_{G} \rho(g) \dd g
\]
is equivalent to the equality 
\[
\frac{1}{|X|} \sum_{x \in X} \rho(x^{-1}) = \int_{G} \rho(g) \dd g
\]
by taking adjoints. 
\end{proof}

In the viewpoints of relationship between $G$-representions, 
one can easily see that the following holds:
\begin{itemize}
\item Let $\rho$ be a finite-dimensional representation of $G$ and $\tau$ a subrepresentation of $\rho$.
Then any $\rho$-design on $G$ is also a $\tau$-design on $G$.
\item Let $\rho_1$, $\rho_2$ be both finite-dimensional $G$-representations.
Then for non-empty finite subset $X$ of $G$, the following conditions are equivalent:
\begin{enumerate}
\item $X$ is a $\rho_1$-design and a $\rho_2$-design on $G$, simultaneously.
\item $X$ is a $(\rho_1 \oplus \rho_2)$-design on $G$.
\end{enumerate}
\end{itemize}

Let us denote by $\widehat{G}$ the unitary dual of the compact group $G$ (see \cref{subsection:repre_and_duals}).
Recall that any finite-dimensional $G$-representation is unitarizable and completely reducible.
Therefore, for each collection $\Lambda$ of finite-dimensional $G$-representations, by putting 
\[
\supp \Lambda := \bigcup_{\sigma \in \Lambda} \{ \rho \in \widehat{G} \mid \rho \text{ appears as a subrepresentation of } \sigma \} \subset \widehat{G},
\]
the concept of $\Lambda$-designs on $G$ 
and that of $(\supp \Lambda)$-designs on $G$ are equivalent.

Let us fix a irreducible unitary $G$-representation $\rho \in \widehat{G}$ 
and give a characterization of $\rho$-designs on $G$ below.

\begin{proposition}\label{proposition:characterization_of_designs}
Let $(\rho,V_\rho) \in \widehat{G}$.
\begin{enumerate}
\item If $\rho$ is the trivial as a $G$-representation,
then any non-empty finite multi-set on $G$ is a $\rho$-design.
\item Let us consider the case where $\rho$ is not trivial as a representation of $G$.
Then for a non-empty finite multi-set $X$ of $G$, 
the following two conditions are equivalent:
\begin{enumerate}
\item $X$ is a $\rho$-design on $G$.
\item The following equality holds in $\End_\CC(V_\rho)$:
\[
\sum_{x \in X} \rho(x) = 0.
\]
\end{enumerate}
\end{enumerate}
\end{proposition}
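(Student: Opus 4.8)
The plan is to prove \cref{proposition:characterization_of_designs} by directly applying \cref{fact:Bochner_orthogonal_proj} to the unitarizable representation $(\rho,V_\rho)$.

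First I would dispose of part (1): if $\rho$ is trivial, then $\rho(x) = \mathrm{id}_{V_\rho}$ for every $x \in G$, so for any non-empty finite multi-set $X$ we have $\frac{1}{|X|}\sum_{x \in X}\rho(x) = \mathrm{id}_{V_\rho}$, and likewise $\int_G \rho(g)\dd g = \mathrm{id}_{V_\rho}$ by the same constancy (or by \cref{fact:Bochner_orthogonal_proj}, noting $V_\rho^G = V_\rho$). Hence the design equality holds trivially.

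For part (2), recall that $\rho$ may be taken to be unitary (any finite-dimensional $G$-representation is unitarizable), so \cref{fact:Bochner_orthogonal_proj} applies and tells us that $\int_G \rho(g)\dd g$ is the orthogonal projection of $V_\rho$ onto $V_\rho^G$. The key point is that since $\rho$ is irreducible and non-trivial, $V_\rho^G$ is a $G$-stable subspace of $V_\rho$ that is not all of $V_\rho$ (otherwise $\rho$ would be trivial), so by irreducibility $V_\rho^G = 0$. Therefore $\int_G \rho(g)\dd g = 0$ in $\End_\CC(V_\rho)$. Consequently the defining equation of a $\rho$-design, $\frac{1}{|X|}\sum_{x\in X}\rho(x) = \int_G \rho(g)\dd g$, becomes $\frac{1}{|X|}\sum_{x\in X}\rho(x) = 0$, which (multiplying by the nonzero scalar $|X|$) is equivalent to $\sum_{x\in X}\rho(x) = 0$. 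This establishes the equivalence of (a) and (b).

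I do not anticipate a genuine obstacle here; the only thing to be slightly careful about is the irreducibility argument showing $V_\rho^G = 0$ — one must observe that $V_\rho^G$ is automatically $G$-stable (indeed $G$ acts as the identity on it) and invoke that $\rho$ being non-trivial forces $V_\rho^G \neq V_\rho$, so irreducibility leaves only $V_\rho^G = 0$. Everything else is a direct unwinding of the definition of $\rho$-design together with \cref{fact:Bochner_orthogonal_proj}.
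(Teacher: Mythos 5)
Your proposal is correct and follows essentially the same route as the paper: both apply \cref{fact:Bochner_orthogonal_proj} and use irreducibility together with non-triviality to conclude that $V_\rho^G = 0$, hence $\int_G \rho(g)\dd g = 0$, so the design condition reduces to $\sum_{x \in X}\rho(x) = 0$. The only cosmetic difference is that the paper argues vector-wise (showing $\int_G \rho(g)v \dd g$ is a $G$-invariant vector and therefore zero), whereas you phrase it operator-wise via the projection onto $V_\rho^G$; these are the same argument.
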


\begin{proof}[Proof of \cref{proposition:characterization_of_designs}]
In the case where $\rho$ is trivial, the claim is easy.
Thus we assume that $\rho$ is not trivial as a representation of $G$.
In order to prove our claim, it suffices to show that 
\[
\left(\int_{G} \rho(g) \dd g \right) v = 0 
\]
in $V_\rho$ for any $v \in V_\rho$.
Take any $v \in V_\rho$.
Then by \cref{fact:Bochner_orthogonal_proj}, 
the Bochner integration 
\[
\int_{G} (\rho(g) v) \dd g \in 
V_\rho
\]
is a $G$-invariant vector in $V_\rho$.
Recall that $V_\rho$ is irreducible and non-trivial, 
and hence $V_\rho$ has no non-zero $G$-invariant vectors.
This implies that 
\[
\left( \int_{G} \rho(g) \dd g \right) v = \int_{G} (\rho(g) v) \dd g = 0
\]
in $V_\rho$.
\end{proof}

We also note that in the case where our multi-set on $G$ 
is a finite subgroup of $G$, 
the following proposition holds 
as a direct corollary to \cref{fact:Bochner_orthogonal_proj}:
\begin{proposition}
Let $\Gamma$ be a finite subgroup of $G$ and $(\rho,V)$ a finite-dimensional $G$-representation.
We consider $\Gamma$ as a non-empty finite multi-set on $G$ in a natural sense.
Then the following two conditions on $\Gamma$ are equivalent:
\begin{description}
\item[Condition (i):] $\Gamma$ is a $\rho$-design on $G$. 
\item[Condition (ii):] $V^\Gamma = V^G$, where $V^\Gamma$ [resp.~$V^G$] denotes the subspaces of all $\rho(\Gamma)$ [resp.~$\rho(G)$] fixed vectors in $V$. 
\end{description}
\end{proposition}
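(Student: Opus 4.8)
The plan is to deduce this immediately from \cref{fact:Bochner_orthogonal_proj} by exploiting that, for a finite subgroup $\Gamma \le G$, the averaging operator over $\Gamma$ is itself an orthogonal projection — the same type of object as the Haar-averaging operator over $G$ — so the two are equal precisely when their images agree. Concretely, fix a $G$-invariant Hermitian inner product on $V$ (possible since every finite-dimensional $G$-representation is unitarizable); this is in particular $\Gamma$-invariant. Set
\[
A_\Gamma := \frac{1}{|\Gamma|} \sum_{\gamma \in \Gamma} \rho(\gamma) \in \End_\CC(V), \qquad A_G := \int_G \rho(g)\dd g \in \End_\CC(V).
\]
By \cref{fact:Bochner_orthogonal_proj}, $A_G$ is the orthogonal projection of $V$ onto $V^G$. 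By the standard finite-group averaging argument — $A_\Gamma$ fixes $V^\Gamma$ pointwise, maps $V$ into $V^\Gamma$ (since $\rho(\gamma') A_\Gamma = A_\Gamma$ for all $\gamma' \in \Gamma$ by reindexing the sum), and satisfies $A_\Gamma^* = A_\Gamma$ because $\rho(\gamma^{-1})$ is the adjoint of $\rho(\gamma)$ and $\gamma \mapsto \gamma^{-1}$ permutes $\Gamma$ — the operator $A_\Gamma$ is the orthogonal projection of $V$ onto $V^\Gamma$.

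Now the equivalence is formal. Condition~(i) says $A_\Gamma = A_G$, i.e. the two orthogonal projections coincide; two orthogonal projections are equal if and only if they have the same image, and their images are $V^\Gamma$ and $V^G$ respectively, which is Condition~(ii). For the one direction that needs a remark: always $V^G \subseteq V^\Gamma$ since $\Gamma \subseteq G$, so Condition~(ii) is really the single containment $V^\Gamma \subseteq V^G$; and if this holds then both projections have image $V^G$ and are therefore equal, giving Condition~(i). Conversely if $A_\Gamma = A_G$ then $V^\Gamma = \operatorname{im} A_\Gamma = \operatorname{im} A_G = V^G$.

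There is no real obstacle here; the only point requiring a line of care is verifying that $A_\Gamma$ is genuinely the orthogonal projection onto $V^\Gamma$ (rather than merely some projection), which is why fixing a $G$-invariant — hence $\Gamma$-invariant — inner product at the outset matters, exactly as in the proof of \cref{proposition:inv_designs}. Everything else is bookkeeping about orthogonal projections being determined by their images.
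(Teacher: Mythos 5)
Your proof is correct and follows exactly the route the paper intends: the proposition is stated there as a direct corollary of \cref{fact:Bochner_orthogonal_proj}, and your argument just spells out the details — the $\Gamma$-average is the orthogonal projection onto $V^\Gamma$ (with respect to a $G$-invariant, hence $\Gamma$-invariant, inner product), the Haar average is the orthogonal projection onto $V^G$, and two orthogonal projections for the same inner product agree if and only if their images coincide. Nothing to add.
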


\subsection{Designs on direct products of compact groups}\label{subsection:designs_on_products}

Let $G$ and $H$ be both compact Hausdorff groups.
Then the direct product $G \times H$ is also a compact Hausdorff group.
In this subsection, we study designs on $G \times H$.

For a finite-dimensional $G$-represention $(\rho,V)$ 
and a finite-dimensional $H$-representation $(\sigma,W)$,
we obtain a finite-dimensional $(G \times H)$-represention
$(\rho \boxtimes \sigma,V \otimes W)$ defined by 
\[
(\rho \boxtimes \sigma)(g,h) := \rho(g) \otimes \sigma(h) \in \GL_\CC(V \otimes W)
\]
for each $(g,h) \in G \times H$.

The following proposition will be applied for our inductive constructions
of unitary designs:

\begin{proposition}\label{proposition:designs_on_products}
Let $(\rho,V)$ and $(\sigma,W)$ be a finite-dimensional $G$-representation and a finite-dimensional $H$-representation, respectively. 
Take any $\rho$-design $X$ on $G$ and any $\sigma$-design $Y$ on $H$.
Then the non-empty finite multi-set $X \times Y$ on $G \times H$ is
a $(\rho \boxtimes \sigma)$-design on $G \times H$.
\end{proposition}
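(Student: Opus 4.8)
The plan is to reduce the statement about $X \times Y$ to the two hypotheses by factoring the relevant Bochner integral over $G \times H$ as a tensor product of the Bochner integrals over $G$ and over $H$. Concretely, write $X = \{x_1,\dots,x_N\}_{\mathrm{mult}}$ and $Y = \{y_1,\dots,y_M\}_{\mathrm{mult}}$, so that by definition $X \times Y = \{(x_i,y_j) \mid 1 \le i \le N, 1 \le j \le M\}_{\mathrm{mult}}$, and compute in $\End_\CC(V \otimes W)$:
\[
\frac{1}{|X \times Y|} \sum_{(x,y) \in X \times Y} (\rho \boxtimes \sigma)(x,y)
= \frac{1}{NM} \sum_{i=1}^N \sum_{j=1}^M \rho(x_i) \otimes \sigma(y_j)
= \left( \frac{1}{N} \sum_{i=1}^N \rho(x_i) \right) \otimes \left( \frac{1}{M} \sum_{j=1}^M \sigma(y_j) \right),
\]
using bilinearity of the tensor product on $\End_\CC(V) \otimes \End_\CC(W) \cong \End_\CC(V \otimes W)$. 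By the hypothesis that $X$ is a $\rho$-design and $Y$ is a $\sigma$-design, the two factors equal $\int_G \rho(g)\dd g$ and $\int_H \sigma(h)\dd h$ respectively, so the expression becomes $\left( \int_G \rho(g)\dd g \right) \otimes \left( \int_H \sigma(h)\dd h \right)$.

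It then remains to identify this tensor product of Bochner integrals with the single Bochner integral $\int_{G \times H} (\rho \boxtimes \sigma)(g,h) \dd(g,h)$ taken against the Haar measure on $G \times H$, which is the product of the Haar measures on $G$ and $H$ (this product measure is a two-sided invariant probability measure, hence \emph{is} the Haar measure by uniqueness, as recalled in \cref{subsection:Haar_Bochner}). To see the factorization of the integral, I would use the characterization of the Bochner integral by pairing with linear functionals: it suffices to check that for every $\eta \in (V \otimes W)^\vee$,
\[
\lrangle*{\left( \int_G \rho(g)\dd g \right) \otimes \left( \int_H \sigma(h)\dd h \right), \eta} = \int_{G \times H} \lrangle*{(\rho \boxtimes \sigma)(g,h), \eta} \dd(g,h).
\]
Since functionals of the form $\xi \otimes \zeta$ with $\xi \in V^\vee$, $\zeta \in W^\vee$ span $(V \otimes W)^\vee$, it is enough to verify this for such $\eta = \xi \otimes \zeta$; there the left side is $\lrangle*{\int_G \rho(g)\dd g, \xi} \cdot \lrangle*{\int_H \sigma(h)\dd h, \zeta}$, which by the defining property of the Bochner integral equals $\left( \int_G \lrangle{\rho(g),\xi}\dd g \right)\left( \int_H \lrangle{\sigma(h),\zeta}\dd h \right)$, and this in turn equals $\int_{G \times H} \lrangle{\rho(g),\xi}\lrangle{\sigma(h),\zeta}\dd(g,h)$ by Fubini's theorem for the product measure, which is exactly the right-hand side.

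The only mild subtlety — the part worth stating carefully rather than the part that is genuinely hard — is the bookkeeping around the product measure being the Haar measure of $G \times H$ and the application of Fubini to the continuous (hence $L^1$) integrand $\lrangle{\rho(g),\xi}\lrangle{\sigma(h),\zeta}$; everything else is the bilinearity computation above and the linear-functional characterization of Bochner integrals. An alternative, essentially equivalent, route that avoids even mentioning Fubini is to invoke \cref{fact:Bochner_orthogonal_proj}: after unitarizing $\rho$ and $\sigma$ (hence $\rho \boxtimes \sigma$ on $V \otimes W$ with the tensor inner product), $\int_G \rho\dd g$ and $\int_H \sigma\dd h$ are the orthogonal projections onto $V^G$ and $W^H$, so their tensor product is the orthogonal projection onto $V^G \otimes W^H$; and one checks $(V \otimes W)^{G \times H} = V^G \otimes W^H$ (the inclusion $\supseteq$ is clear, and $\subseteq$ follows by decomposing into isotypic components, or by applying the two projections in succession), so this tensor product equals $\int_{G \times H} (\rho \boxtimes \sigma)\dd(g,h)$ by \cref{fact:Bochner_orthogonal_proj} again. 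Either way the displayed chain of equalities gives $\frac{1}{|X \times Y|}\sum_{(x,y) \in X \times Y}(\rho \boxtimes \sigma)(x,y) = \int_{G \times H}(\rho \boxtimes \sigma)(g,h)\dd(g,h)$, which is precisely the assertion that $X \times Y$ is a $(\rho \boxtimes \sigma)$-design.
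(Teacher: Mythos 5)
Your proof is correct and follows essentially the same route as the paper: both factor the Bochner integral over $G \times H$ via Fubini's theorem as $\left(\int_G \rho(g)\dd g\right) \otimes \left(\int_H \sigma(h)\dd h\right)$ and match it with the similarly factored average over $X \times Y$. You merely spell out in more detail (pairing with functionals $\xi \otimes \zeta$, plus an optional projection argument via \cref{fact:Bochner_orthogonal_proj}) what the paper dismisses with ``one can easily check.''
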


\begin{proof}[Proof of \cref{proposition:designs_on_products}]
By the Fubini's theorem for Haar measures on $G \times H$, 
one can easily check that 
\[
\int_{G \times H} (\rho(g) \otimes \sigma(h)) \dd (g,h) \\
  = \left( \int_{G} \rho(g) \dd g \right) \otimes \left( \int_{H} \sigma(h) \dd h \right).
\]
Therefore, we obtain that 
\begin{align*}
\int_{G \times H} (\rho \boxtimes \sigma) (g,h) \dd (g,h)
  &= \int_{G \times H} (\rho(g) \otimes \sigma(h)) \dd (g,h) \\
  &= \left( \int_{G} \rho(g) \dd g \right) \otimes \left( \int_{H} \sigma(h) \dd h \right) \\
  &= \left( \frac{1}{|X|}\sum_{x \in X} \rho(x) \right) \otimes \left( \frac{1}{|Y|}\sum_{y \in Y} \sigma(y) \right) \\
  &= \frac{1}{|X \times Y|} \sum_{(x,y) \in X \times Y} \rho(x) \otimes \sigma(y).
\end{align*}
This proves our claim.
\end{proof}

Let us denote by $\widehat{G}$ and $\widehat{H}$ the unitary duals of $G$ and $H$ (in the sense of \cref{subsection:repre_and_duals}), respectively. 
Suppose at least one of $G$ and $H$ is second countable. 
Then it should be noted that  
the set 
\[
\{ \rho \otimes \sigma \mid \rho \in \widehat{G}, \sigma \in \widehat{H} \}
\]
can be considered as a unitry dual of $G \times H$,
that is, 
$\rho \otimes \sigma$ is irreducible as $(G \times H)$-representation
for any $(\rho,\sigma) \in \widehat{G} \times \widehat{H}$, 
and for each finite-dimensional irreducible $(G \times H)$-representation $\pi$, 
there unquely exists $(\rho,\sigma) \in \widehat{G} \times \widehat{H}$ 
such that $\pi$ is equivalent to $\rho \otimes \sigma$ 
(see \cite[Theorem 7.17]{Folland95AbstractHA} for the details).

\section{Inductive constructions of designs on compact groups} \label{section:inductive_construction_abstract}

Let $G$ be a second countable compact Hausdorff group and $(G,K)$ a compact Gelfand pair as in \cref{subsection:ZSF}.
Then $K$ itself is also a compact Hausdorff group.
In this section, we give an algorithm to construct designs on $G$
from designs on $K$ and zeros of zonal spherical functions for $(G,K)$.

\subsection{Inductive constructions for compact groups} \label{subsection:ind_const}

For each finite-dimensional $G$-representation $(\rho,V)$,
we obtain the $K$-representation $(\rho|_K,V)$ by defining 
\[
\rho|_K : K \rightarrow \GL_\CC(V), ~ k \mapsto \rho(k).
\]
Note that even if $\rho$ is irreducible as $G$-representation, 
$\rho|_K$ is not needed to be irreducible as $K$-representation.

In order to state our results simply, 
we use the following symbols: 
\[
\widehat{G}_K^* := \widehat{G} \setminus \{ \text{the trivial irreducible } G\text{-representation} \}.
\]
or
\[
\widehat{G}_K^* := \widehat{G}_K \cap \widehat{G}^*.
\]

One of main results of this paper is the following:

\begin{theorem}\label{theorem:gen_for_irred}
Let us fix $\rho \in \widehat{G}^*$ where $G$ is a second countable compact group.
Take any $(\rho|_K)$-design $Y$ on the compact Hausdorff group $K$.
\begin{enumerate}
\item If $\rho$ is not $K$-spherical, then $Y$ itself is a $\rho$-design on $G$.
\item Let us consider the case where $\rho$ is $K$-spherical. 
Then for any non-empty finite multi-set $\Omega$ on $G$ with 
\[
\sum_{z \in \Omega} Z_\rho^{(G,K)}(z) = 0,
\]
the non-empty finite multi-set $X$ in $G$ defined by 
\[
X := Y \cdot \Omega \cdot Y 
\]
is a $\rho$-design on $G$.
\end{enumerate}
\end{theorem}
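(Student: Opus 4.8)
The plan is to decompose $\rho|_K$ into $K$-irreducibles and analyze the operator $\sum_{x\in X}\rho(x)$ block-by-block. Since $\rho$ is $K$-spherical and $(G,K)$ is a Gelfand pair, the fixed space $V_\rho^K$ is one-dimensional; fix a unit vector $v\in V_\rho^K$. First I would note that $\frac{1}{|Y|}\sum_{y\in Y}\rho|_K(y)$ equals the Bochner integral $\int_K \rho(k)\,\mathrm{d}k$, which by \cref{fact:Bochner_orthogonal_proj} is the orthogonal projection $P_K$ of $V_\rho$ onto $V_\rho^K=\CC v$. Writing $A_Y:=\sum_{y\in Y}\rho(y)$, we thus have $A_Y=|Y|\,P_K$, so $A_Y w=0$ for all $w\perp v$ and $A_Y v=|Y| v$.

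Next I would compute $\sum_{x\in X}\rho(x)=A_Y\,\bigl(\sum_{z\in\Omega}\rho(z)\bigr)\,A_Y$ using $X=Y\cdot\Omega\cdot Y$ and bilinearity. Because $A_Y$ kills the orthogonal complement of $v$ on the right and (being self-adjoint, as $P_K$ is) also projects onto $\CC v$ on the left, the entire product collapses to a rank-$\le 1$ operator supported on $v$: explicitly, for any $w\in V_\rho$,
\[
\Bigl(\sum_{x\in X}\rho(x)\Bigr)w
 = |Y|^2\,\langle w,v\rangle_\rho\,\sum_{z\in\Omega}\langle v,\rho(z)v\rangle_\rho\,v
 = |Y|^2\,\langle w,v\rangle_\rho\,\Bigl(\sum_{z\in\Omega}Z_\rho^{(G,K)}(z)\Bigr)v,
\]
by the very definition of the normalized zonal spherical function $Z_\rho^{(G,K)}(\omega)=\langle v,\rho(\omega)v\rangle_\rho$. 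By hypothesis the scalar $\sum_{z\in\Omega}Z_\rho^{(G,K)}(z)$ vanishes, so $\sum_{x\in X}\rho(x)=0$. Since $\rho\in\widehat{G}^*$ is non-trivial, \cref{proposition:characterization_of_designs} then gives that $X$ is a $\rho$-design on $G$, proving part (2).

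For part (1), if $\rho$ is not $K$-spherical then $V_\rho^K=0$, so the projection $P_K=\int_K\rho(k)\,\mathrm{d}k$ is the zero operator, i.e.\ $\frac{1}{|Y|}\sum_{y\in Y}\rho(y)=0$; as $\rho$ is non-trivial this is exactly the condition in \cref{proposition:characterization_of_designs} for $Y$ to be a $\rho$-design on $G$. The one point demanding care is the interplay between the $G$-invariant inner product used to define $Z_\rho^{(G,K)}$ and the adjoint structure of $A_Y$: one must use that $\rho$ is unitary with respect to $\langle\cdot,\cdot\rangle_\rho$, so that $P_K$ is genuinely \emph{self-adjoint} (hence $A_Y^*=A_Y$) and the "sandwich" $A_Y(\cdot)A_Y$ really does factor through the line $\CC v$ on both sides; this is where \cref{fact:Bochner_orthogonal_proj} is used in full strength. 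The rest is the routine bilinear bookkeeping sketched above, together with the observation that $Y\cdot\Omega\cdot Y$ is a well-defined multi-set by the constructions in \cref{subsection:multi-sets}.
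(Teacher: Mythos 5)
Your argument is correct and follows essentially the same route as the paper: you identify $\frac{1}{|Y|}\sum_{y\in Y}\rho(y)$ with the rank-one orthogonal projection onto $V_\rho^K=\CC v$ (via \cref{fact:Bochner_orthogonal_proj} and one-dimensionality from the Gelfand pair), so the sandwich $A_Y\bigl(\sum_{z\in\Omega}\rho(z)\bigr)A_Y$ collapses to the scalar $\sum_{z}Z_\rho^{(G,K)}(z)=0$ times a projection, and then \cref{proposition:characterization_of_designs} finishes both parts. The paper phrases the same computation through matrix coefficients $\langle v_1,\sum_x\rho(x)v_2\rangle_\rho$ and invokes \cref{proposition:inv_designs} where you use self-adjointness of the projection directly; up to a harmless inner-product convention (possible complex conjugation of the vanishing scalar), the two are the same proof.
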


\begin{proof}[Proof of \cref{theorem:gen_for_irred}]
Recall that by Proposition \ref{proposition:inv_designs}, 
$Y^{-1}$ is also a $\rho$-design on $G$.
By Fact \ref{fact:Bochner_orthogonal_proj}, 
the operator 
\[
\frac{1}{|Y|} \sum_{y \in Y} \rho(y) = \int_{K} \rho(k) dk = \frac{1}{|Y|} \sum_{y \in Y} \rho(y^{-1}).
\] 
on $V_\rho$
is the orthogonal projection onto $V_\rho^K$.

In the case where $\rho$ is not $K$-spherical, 
$V_\rho^K = 0$ and hence 
\[
\sum_{y \in Y} \rho(y) = 0.
\] 
Therefore, $Y$ itself is a $\rho$-design on $G$ 
by \cref{proposition:characterization_of_designs} in this case.

Let us consider the case where $\rho$ is $K$-spherical.
We fix a non-empty finite multi-set $\Omega$ on $G$ with 
\[
\sum_{z \in \Omega} Z_\rho^{(G,K)}(z) = 0.
\]
By \cref{proposition:characterization_of_designs}, 
we only need to show that 
\[
\sum_{x \in X} \rho(x) = 0
\]
in $\End(V_\rho)$
for $X := Y \cdot \Omega \cdot Y$.
Let us take any vector $v_1,v_2 \in V_\rho$.
It suffices to show that 
\[
\lrangle*{v_1, \sum_{x \in X} \rho(x) v_2}_\rho = 0
\]
where $\lrangle{\cdot,\cdot}_\rho$ denotes 
the fixed $G$-invariant Hermitian inner-product on $V_\rho$.
Since the vectors
\[
\frac{1}{|Y|} \sum_{y \in Y} \rho(y^{-1})v_1,\quad \frac{1}{|Y|} \sum_{y \in Y} \rho(y)v_2
\]
are both $K$-invariant in $V_\rho$ and $V_\rho^K$ is of one-dimensional, 
one can find $\lambda_1,\lambda_2 \in \mathbb{C}$ such that 
\begin{align*}
\lambda_1 w &= \frac{1}{|Y|} \sum_{y \in Y} \rho(y^{-1})v_1, \\
\lambda_2 w &= \frac{1}{|Y|} \sum_{y \in Y} \rho(y)v_2.
\end{align*}
Then we have 
\begin{align*}
\lrangle*{v_1, \frac{1}{|X|} \sum_{x \in X} \rho(x) v_2}_\rho 
  &= \frac{1}{|X|} \lrangle*{v_1,  \sum_{z \in \Omega} \sum_{y_1,y_2 \in Y} \rho(y_1) \rho(z) \rho(y_2) v_2}_\rho \\
  &= \frac{1}{|\Omega| |Y|^2} \sum_{z \in \Omega} \lrangle*{\sum_{y_1 \in Y} \rho(y_1^{-1}) v_1,  \rho(z)\sum_{y_2 \in Y} \rho(y_2) v_2}_\rho \\
  &= \lambda_1 \overline{\lambda_2}  \frac{1}{|\Omega|} \sum_{z \in \Omega} \lrangle*{w,\rho(z)w}_\rho \\
  &= \lambda_1 \overline{\lambda_2} \frac{1}{|\Omega|} \sum_{z \in \Omega} Z^{(G,K)}_\rho(z) \\
  &= 0.
\end{align*}
This completes the proof.
\end{proof}

In order to state our theorem more general situation, 
we use the following notation:

\begin{definition}
For a fixed subset $\Upsilon$ of $\widehat{G}^*_K$,
we say that 
a non-empty finite multi-set $\Omega$ on $G$ has the property $\Upsilon$
if 
\[
\sum_{z \in \Omega} Z_\rho^{(G,K)}(z) = 0 \text{ for any } \rho \in \Upsilon.
\]
\end{definition}

Let us state our inductive constructions of designs on $G$ below: 

\begin{corollary}\label{corollary:const_gen}
We fix a collection $\Lambda$ of finite-dimensional $G$-representations
and a $(\Lambda|_K)$-design $Y$ on $K$,
where we put 
\[
\Lambda|_K := \{ \tau|_K \mid \tau \in \Lambda \}.
\]
\begin{enumerate}
\item If $(\supp \Lambda) \cap \widehat{G}_K^* = \emptyset$ (see \cref{subsection:definition_designs} for the notation of $\supp \Lambda$), 
then $Y$ itself is a $\Lambda$-design on $G$.
\item Let us consider the case where $(\supp \Lambda) \cap \widehat{G}_K^* \neq \emptyset$.
Take a finite decomposition 
\[
(\supp \Lambda) \cap \widehat{G}^*_K = \bigsqcup_{i=1}^l \Upsilon_i
\]
of $(\supp \Lambda) \cap \widehat{G}_K^*$. 
For each $i = 1,\dots,l$, 
we fix a non-empty finite multi-set $\Omega_i$ on $G$ 
with the property $\Upsilon_i$.
Then a non-empty finite multi-set 
\[
X := Y \prod_{i=1}^l (\Omega_i \cdot Y)
\]
is a $\Lambda$-design on $G$.
\end{enumerate}
\end{corollary}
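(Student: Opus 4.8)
The plan is to deduce \cref{corollary:const_gen} from \cref{theorem:gen_for_irred} by a straightforward reduction: first pass from the collection $\Lambda$ of (possibly reducible) representations to the finite set $\supp\Lambda\subset\widehat G$ of irreducible constituents, then handle each irreducible constituent separately according to whether it is $K$-spherical, and finally chain the constructions together using \cref{proposition:product_of_designs}. The key observation, already recorded in the excerpt, is that being a $\Lambda$-design is equivalent to being a $(\supp\Lambda)$-design, since every finite-dimensional $G$-representation is unitarizable and completely reducible, and a multi-set is a $(\rho_1\oplus\rho_2)$-design iff it is simultaneously a $\rho_1$- and $\rho_2$-design. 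Likewise, a $(\Lambda|_K)$-design $Y$ on $K$ is automatically a $(\rho|_K)$-design on $K$ for every $\rho\in\supp\Lambda$, because $\rho|_K$ is a subrepresentation of $\sigma|_K$ for a suitable $\sigma\in\Lambda$ containing $\rho$, and any design for a representation is a design for each of its subrepresentations.

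First I would prove part (1). If $(\supp\Lambda)\cap\widehat G_K^*=\emptyset$, then every $\rho\in\supp\Lambda$ is either trivial or non-$K$-spherical. For the trivial constituents, every non-empty finite multi-set is a design by \cref{proposition:characterization_of_designs}(1). For each non-trivial, non-$K$-spherical $\rho$, \cref{theorem:gen_for_irred}(1) applied to the $(\rho|_K)$-design $Y$ shows $Y$ is a $\rho$-design on $G$. Hence $Y$ is a $\rho$-design for every $\rho\in\supp\Lambda$, i.e.\ a $(\supp\Lambda)$-design, i.e.\ a $\Lambda$-design on $G$.

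Next I would prove part (2). Write $(\supp\Lambda)\cap\widehat G_K^*=\bigsqcup_{i=1}^l\Upsilon_i$ and fix $\Omega_i$ with property $\Upsilon_i$. Set $X_i:=Y\cdot\Omega_i\cdot Y$. Fix any $\rho\in\supp\Lambda$ and split into cases. If $\rho$ is trivial, $X$ is trivially a $\rho$-design. If $\rho$ is non-trivial and not $K$-spherical, then $Y$ is a $\rho$-design by \cref{theorem:gen_for_irred}(1), and since $X=Y\prod_{i=1}^l(\Omega_i\cdot Y)$ is obtained from $Y$ by left- and right-multiplication by fixed non-empty finite multi-sets (namely, take $Y'=Y$ as the "core" design and absorb everything else into the $Y$ and $Z$ of \cref{proposition:product_of_designs}, or iterate that proposition), $X$ is again a $\rho$-design on $G$. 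If $\rho$ is $K$-spherical, then $\rho\in\Upsilon_{i_0}$ for a unique $i_0$, so $\sum_{z\in\Omega_{i_0}}Z_\rho^{(G,K)}(z)=0$; by \cref{theorem:gen_for_irred}(2), $X_{i_0}=Y\cdot\Omega_{i_0}\cdot Y$ is a $\rho$-design on $G$. Writing $X=A\cdot X_{i_0}\cdot B$ where $A=Y\prod_{i<i_0}(\Omega_i\cdot Y)\cdot\Omega_{i_0}\cdot\text{(nothing)}$—more carefully, $X=\big(Y\prod_{i=1}^{i_0-1}(\Omega_i\cdot Y)\cdot\Omega_{i_0}\big)\cdot Y\cdot\big(\prod_{i=i_0+1}^{l}(\Omega_i\cdot Y)\big)$ after one more application of associativity, exhibiting $X$ as $A\cdot X_{i_0}\cdot B$ with $X_{i_0}=Y\cdot\Omega_{i_0}\cdot Y$ playing the role of the design and $A,B$ fixed non-empty finite multi-sets—\cref{proposition:product_of_designs} gives that $X$ is a $\rho$-design on $G$. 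In all cases $X$ is a $\rho$-design, so $X$ is a $(\supp\Lambda)$-design, hence a $\Lambda$-design on $G$.

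The only mildly delicate point is bookkeeping with the multi-set operations: one must be careful that $\prod$ is built by the left-associated product $X\cdot Y$, which is noncommutative, so the factorization of $X$ that isolates the relevant $Y\cdot\Omega_{i_0}\cdot Y$ block must respect the stated order; the associativity remark in \cref{subsection:multi-sets} is exactly what licenses regrouping $X=Y\prod_{i=1}^l(\Omega_i\cdot Y)$ into $A\cdot(Y\cdot\Omega_{i_0}\cdot Y)\cdot B$ without reordering factors. Once that regrouping is justified, everything else is an immediate invocation of the propositions and theorem already established, so I expect no real obstacle—the "hard part" was \cref{theorem:gen_for_irred} itself, and the corollary is a clean packaging of it.
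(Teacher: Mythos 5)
Your argument is correct and follows essentially the paper's own route: the paper likewise deduces the corollary from \cref{theorem:gen_for_irred} together with the block-isolation argument you describe (packaged there as \cref{lemma:prod} and proved via closure of designs under left/right multiplication by finite multi-sets, i.e.\ \cref{proposition:product_of_designs}). Only note the harmless off-by-one in your displayed regrouping --- to isolate $X_{i_0}=Y\cdot\Omega_{i_0}\cdot Y$ the left factor must end with $\Omega_{i_0-1}$, not $\Omega_{i_0}$ --- and that when $i_0=1$ or $i_0=l$ one flanking multi-set is empty, so one needs the trivial one-sided variant of \cref{proposition:product_of_designs}.
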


\cref{corollary:const_gen} above follows from \cref{theorem:gen_for_irred} and the lemma below:

\begin{lemma}\label{lemma:prod}
Let $\{ \Upsilon_i \}_{i=1,\dots,l}$ be a finite family of 
collections of finite-dimensional $G$-representations, 
$Y$ a non-empty finite multi-set on $G$
and $\{ \Omega_i \}_{i = 1,\dots,l}$ a family of non-empty finite multi-sets on $G$.
Suppose that $Y \cdot \Omega_i \cdot Y$ is a $\Upsilon_i$-design on $G$
for each $i = 1,\dots,l$.
Then 
\[
Y \cdot \prod_{i=1}^l (\Omega_i \cdot Y)
\]
is a $(\bigcup_{i=1}^l \Upsilon_i)$-design on $G$.
\end{lemma}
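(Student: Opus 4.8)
The plan is to prove this by induction on $l$, with the inductive step resting entirely on \cref{proposition:product_of_designs}. The key observation is that for any fixed $\rho \in \Upsilon_j$, the multi-set $Y \cdot \Omega_j \cdot Y$ is a $\rho$-design on $G$ (since it is a $\Upsilon_j$-design), and \cref{proposition:product_of_designs} says that pre-multiplying or post-multiplying a $\rho$-design by arbitrary non-empty finite multi-sets preserves the property of being a $\rho$-design. So the strategy is to recognize $Y \cdot \prod_{i=1}^l (\Omega_i \cdot Y)$ as, for each index $j$, a product of the form (some multi-set) $\cdot\, (Y \cdot \Omega_j \cdot Y) \cdot$ (some multi-set), hence a $\rho$-design for every $\rho \in \Upsilon_j$, and therefore a $\bigl(\bigcup_i \Upsilon_i\bigr)$-design once $j$ ranges over all of $\{1,\dots,l\}$.

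First I would set $P := Y \cdot \prod_{i=1}^l (\Omega_i \cdot Y)$ and fix an arbitrary $j \in \{1,\dots,l\}$ and an arbitrary $\rho \in \Upsilon_j$. Using the associativity of the product of multi-sets (noted in \cref{subsection:multi-sets}), I would regroup
\[
P = \Bigl( Y \cdot \prod_{i=1}^{j-1} (\Omega_i \cdot Y) \cdot \Omega_j \Bigr) \cdot \bigl( Y \bigr) \cdot \Bigl( \prod_{i=j+1}^{l} (\Omega_i \cdot Y) \Bigr),
\]
but in fact it is cleaner to carve out the full block $Y \cdot \Omega_j \cdot Y$: writing $L_j := Y \cdot \prod_{i=1}^{j-1}(\Omega_i \cdot Y) \cdot \Omega_j$ if $j \geq 2$ and $L_1 := Y \cdot \Omega_1$, wait — one must be careful that the leftmost $Y$ and the $Y$ inside the $j$-th factor are the two copies flanking $\Omega_j$. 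For $j=1$ this is transparent: $P = (Y \cdot \Omega_1 \cdot Y) \cdot \prod_{i=2}^{l}(\Omega_i \cdot Y)$. For $j \geq 2$, I would write $P = L_j \cdot (Y \cdot \Omega_j \cdot Y) \cdot R_j$ where $L_j := Y \cdot \prod_{i=1}^{j-1}(\Omega_i \cdot Y) \cdot \Omega_j$ and $R_j := \prod_{i=j+1}^{l}(\Omega_i \cdot Y)$ (with $R_l$ the empty product handled by convention, or simply: for $j=l$ there is no $R_j$ and $P = L_l \cdot (Y\cdot\Omega_l\cdot Y)$). In every case $P = (\text{multi-set}) \cdot (Y \cdot \Omega_j \cdot Y) \cdot (\text{multi-set})$, with the convention that a missing flanking factor is just omitted.

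Then, since $Y \cdot \Omega_j \cdot Y$ is a $\rho$-design on $G$ by hypothesis, \cref{proposition:product_of_designs} gives that $P$ is a $\rho$-design on $G$. As $\rho \in \Upsilon_j$ and $j$ were arbitrary, $P$ is a $\rho$-design for every $\rho \in \bigcup_{i=1}^l \Upsilon_i$, i.e.\ $P$ is a $\bigl(\bigcup_{i=1}^l \Upsilon_i\bigr)$-design on $G$, as claimed. The only real subtlety — and the one place I would be careful in the write-up — is the bookkeeping of the flanking products when $j=1$ or $j=l$, and the (purely formal) appeal to associativity so that the regrouping is legitimate; there is no genuine analytic content beyond \cref{proposition:product_of_designs}. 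One could equivalently phrase the argument as a one-line induction on $l$: assuming $Y \cdot \prod_{i=1}^{l-1}(\Omega_i \cdot Y)$ is a $\bigl(\bigcup_{i=1}^{l-1}\Upsilon_i\bigr)$-design, it remains a design of that collection after appending $\cdot\,(\Omega_l \cdot Y)$ on the right by \cref{proposition:product_of_designs}, while $Y \cdot \Omega_l \cdot Y$ being a $\Upsilon_l$-design and appending the long prefix on the left covers $\Upsilon_l$ — again by \cref{proposition:product_of_designs} — and a multi-set that is simultaneously a $\Lambda_1$-design and a $\Lambda_2$-design is a $(\Lambda_1 \cup \Lambda_2)$-design.
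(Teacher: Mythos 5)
Your proof is correct and is essentially the paper's own: the paper disposes of \cref{lemma:prod} in one line as an easy consequence of the sandwich property of \cref{proposition:product_of_designs} (the proposition it announces will be applied in \cref{subsection:ind_const}; the citation of \cref{proposition:designs_on_products} there appears to be a label mix-up), which is exactly your regrouping-plus-sandwiching argument. One indexing slip to fix in the write-up: for $j\ge 2$ the left flank must be $L_j = Y\cdot\prod_{i=1}^{j-2}(\Omega_i\cdot Y)\cdot\Omega_{j-1}$ rather than your stated $Y\cdot\prod_{i=1}^{j-1}(\Omega_i\cdot Y)\cdot\Omega_j$ (which would duplicate $\Omega_j$ and insert an extra $Y$), and when a flank is absent ($j=1$ or $j=l$) you may take it to be $\{e\}_{\mathrm{mult}}$ so that \cref{proposition:product_of_designs} applies verbatim.
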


\cref{lemma:prod} can be proved easily by applying \cref{proposition:designs_on_products}.

\subsection{Some remarks to apply our methods}

In the setting of \cref{corollary:const_gen}, 
by \cref{proposition:zero_zonal}, 
we can choose a decomposition 
\[
(\supp \Lambda) \cap \widehat{G}^*_K = \bigsqcup_{i=1}^l \Upsilon_i.
\]
satisfying that 
one of the following two conditions holds 
for each $i = 1,\dots,l$:
\begin{description}
\item[Condition (1)] $\{ Z_\rho^{(G,K)} \}_{\rho \in \Upsilon_i}$ has common zeroes in $G$.
\item[Condition (2)] $\{ \Re Z_\rho^{(G,K)} \}_{\rho \in \Upsilon_i}$ has common zeroes in $G$ but $\{ Z_\rho^{(G,K)} \}_{\rho \in \Upsilon_i}$ has no common zeroes in $G$. 
\end{description}
In such situations, as a corollary to \cref{proposition:inv_conj},
for each $i = 1,\dots,l$,
we can take $\Omega_i$ with the property $\Upsilon_i$ as below 
\begin{itemize}
\item If $\Upsilon_i$ satisfies Condition (1) above, 
then $\Omega_i$ can be taken as $\{ \omega \}_{\mathrm{mult}}$ for 
a common zero point $\omega \in G$ for $\{ Z_\rho^{(G,K)} \}_{\rho \in \Upsilon_i}$.
\item If $\Upsilon_i$ satisfies Condition (2) above, 
then $\Omega_i$ can be taken as $\{ \omega,\omega^{-1} \}_{\mathrm{mult}}$ for 
a common zero point $\omega \in G$ for $\{ \Re Z_\rho^{(G,K)} \}_{\rho \in \Upsilon_i}$.
Note that in this situation, $\omega \neq \omega^{-1}$ because of \cref{proposition:inv_conj}.
\end{itemize}

\begin{remark}
Basically, the cardinality of our design $X$ in \cref{corollary:const_gen} is terribly huge if $l$ is large, see \cref{rmk:radiative}.
Therefore, we want to find a  decomposition 
\[
(\supp \Lambda) \cap \widehat{G}^*_K = \bigsqcup_{i=1}^l \Upsilon_i.
\]
with small $l$
and mult-set $\Omega_i$ with the property $\Upsilon_i$ for each $i$.
However, for general subset $\Upsilon$ of $\widehat{G}_K^*$, 
the construction problem of $\Omega$ with the property $\Upsilon$ is not easy.
\end{remark}

\section{Reduction to classic design theory}

The object we investigated in this paper are $\Lambda$-designs on compact group $G$. 
Classic design theory focuses on $\mathcal{F}$-designs on a space $M$, where $\mathcal{F}$ is a functional space. 
We will reduce our designs to classic designs through matrix coefficients of the $G$-representations. 

Let $G$ be a compact Hausdorff group equipped with the probability Haar measure as in \cref{subsection:Haar_Bochner}.
In \cref{section:designs}, we give a definition of designs on $G$
in terms of averages of operators on representations.
In this section, we give a characterization of designs on $G$ in terms of averages of functions on $G$.

The space of all continuous $\CC$-valued functions on $G$ 
is denoted by $C(G)$.

For each $(\rho,V_\rho) \in \widehat{G}$, 
we define a $\CC$-linear map 
\[
\Phi_\rho : V_\rho \otimes V_\rho^{\vee} \rightarrow C(G)
\]
by 
\[
\Phi_\rho(v \otimes \eta) : G \rightarrow \CC, ~ g \mapsto \langle \rho(g^{-1})v,\eta \rangle
\]
The function $\Phi_\rho(v \otimes \eta)$ is called the matrix coefficient of $\rho$ at $(v,\eta)$.

It is well known that $\Phi_\rho$ is injective 
and let us denote the image $\Phi_\rho(V_\rho \otimes V_\rho^{\vee})$ by $C(G)_{\rho \boxtimes \rho^\vee}$.

\begin{remark}[{\cite[Theorem 5.12]{Folland95AbstractHA}}] \label{rmk:Heterotrichales}
$C(G)$ is an infinite-dimensional $(G \times G)$-representation.
For each $\rho \in \widehat{G}$, 
$C(G)_{\rho \boxtimes \rho^\vee}$ is $(G \times G)$-stable finite-dimensional subspace of $C(G)$
which is equivalent to the irreducible $(G \times G)$-representation 
$\rho \boxtimes \rho^\vee$.
Peter--Weyl's theorem claims that 
\begin{itemize}
\item The family of subspaces 
$\{ C(G)_{\rho \boxtimes \rho^\vee} \}_{\rho \in \widehat{G}}$
of $C(G)$
is orthogonal with respect to the $L^2$-inner product induced from the Haar measure.
In particular, 
$\{ C(G)_{\rho \boxtimes \rho^\vee} \}_{\rho \in \widehat{G}}$
is linearly independent in $C(G)$.
\item The algebraic direct sum 
\[
\bigoplus_{\rho \in \widehat{G}} C(G)_{\rho \boxtimes \rho^\vee}
\]
is dense in $C(G)$ with respect to the supremum norm.
\end{itemize}
\end{remark}

For a collection $\Lambda$ of finite-dimensional $G$-representations.
We define the subspace $C(G)_{\Lambda}$ of $C(G)$ by 
\[
C(G)_{\Lambda} := \bigoplus_{\rho \in \supp \Lambda} C(G)_{\rho \boxtimes \rho^\vee}
\]
(see \cref{subsection:definition_designs} for the notation of $\supp \Lambda$). 
Then we have a characterization of $\Lambda$-designs on $G$ as follows:

\begin{proposition}
Let $\Lambda$ be a collection of finite-dimensional $G$-representations.
Then the following three conditions on non-empty finite multi-set $X$ on $G$ are equivalent:
\begin{description}
\item[Condition (i)] $X$ is a $\Lambda$-design on $G$.
\item[Condition (ii)] 
The equality below holds for any $f \in C(G)_{\Lambda}$:
\[
\frac{1}{|X|} \sum_{x \in X} f(x) = \int_{G} f(g) \dd g.
\]
\end{description}
\end{proposition}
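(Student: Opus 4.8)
The plan is to prove the equivalence of Condition (i) and Condition (ii) by reducing the general statement to the defining equalities of $\rho$-designs, one irreducible representation at a time, using the matrix coefficient maps $\Phi_\rho$ introduced above. First I would observe that since $C(G)_\Lambda = \bigoplus_{\rho \in \supp \Lambda} C(G)_{\rho \boxtimes \rho^\vee}$ is a finite algebraic direct sum, Condition (ii) holds for all $f \in C(G)_\Lambda$ if and only if it holds for all $f$ in each summand $C(G)_{\rho \boxtimes \rho^\vee}$ with $\rho \in \supp \Lambda$. Likewise, recalling from \cref{subsection:definition_designs} that being a $\Lambda$-design is equivalent to being a $(\supp \Lambda)$-design, Condition (i) is equivalent to $X$ being a $\rho$-design on $G$ for every $\rho \in \supp \Lambda$. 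So it suffices to prove, for each fixed $\rho \in \widehat{G}$: $X$ is a $\rho$-design on $G$ if and only if $\frac{1}{|X|}\sum_{x \in X} f(x) = \int_G f(g)\dd g$ for every $f \in C(G)_{\rho \boxtimes \rho^\vee}$.

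For this per-representation claim, the key is that every $f \in C(G)_{\rho \boxtimes \rho^\vee}$ has the form $f(g) = \Phi_\rho(v \otimes \eta)(g) = \langle \rho(g^{-1})v, \eta\rangle$ for some $v \in V_\rho$ and $\eta \in V_\rho^\vee$, and that $\Phi_\rho$ is surjective onto $C(G)_{\rho \boxtimes \rho^\vee}$ by definition. Then for any operator $A \in \End_\CC(V_\rho)$, the expression $g \mapsto \langle \rho(g^{-1}) v, \eta\rangle$ depends linearly on $\rho(g^{-1})$, so applying the linear functional $T \mapsto \langle T v, \eta\rangle$ to both $\frac{1}{|X|}\sum_{x \in X}\rho(x^{-1})$ and $\int_G \rho(g^{-1})\dd g$ translates the operator equality into the function equality. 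Concretely, $\frac{1}{|X|}\sum_{x \in X} f(x) = \langle \bigl(\frac{1}{|X|}\sum_{x\in X}\rho(x^{-1})\bigr) v, \eta\rangle$ and $\int_G f(g)\dd g = \langle \bigl(\int_G \rho(g^{-1})\dd g\bigr) v, \eta\rangle$ (the latter using the defining property of the Bochner integral and continuity of the functional). Hence Condition (ii) restricted to $C(G)_{\rho \boxtimes \rho^\vee}$ is equivalent to $\frac{1}{|X|}\sum_{x\in X}\rho(x^{-1}) = \int_G \rho(g^{-1})\dd g$ as operators, since the vectors $v$ and functionals $\eta$ range over all of $V_\rho$ and $V_\rho^\vee$ and thus detect any nonzero operator. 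Finally, by \cref{proposition:inv_designs} (applied with the fact that $\int_G \rho(g^{-1})\dd g = \int_G \rho(g)\dd g$ by invariance of Haar measure), this operator equality is equivalent to $X$ being a $\rho$-design, completing the chain.

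The main obstacle, such as it is, is purely bookkeeping: one must be careful that the map $\Phi_\rho$ uses $\rho(g^{-1})$ rather than $\rho(g)$, so the naive identification produces the "inverted" operator average $\frac{1}{|X|}\sum_x \rho(x^{-1})$, and one needs \cref{proposition:inv_designs} (or directly the two-sided invariance of the Haar measure together with self-adjointness of the Haar projection) to convert back to the standard $\rho$-design condition. A second minor point requiring care is the interchange of the linear functional $\langle \cdot\, v, \eta\rangle$ with the Bochner integral, which is exactly the defining property of the Bochner integral recalled in \cref{subsection:Haar_Bochner} and hence immediate. No representation-theoretic input beyond Peter--Weyl (used only to know $\Phi_\rho$ is well-defined, injective, and has image $C(G)_{\rho \boxtimes \rho^\vee}$, all already cited in \cref{rmk:Heterotrichales}) is needed.
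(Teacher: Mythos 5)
Your proposal is correct and follows essentially the same route as the paper: reduce via the Peter--Weyl decomposition of $C(G)_\Lambda$ to a single $\rho \in \supp\Lambda$, identify functions in $C(G)_{\rho \boxtimes \rho^\vee}$ with matrix coefficients $\langle \rho(g^{-1})v,\eta\rangle$, pass between the function-averaging identity and the operator identity for $\frac{1}{|X|}\sum_{x}\rho(x^{-1})$ using that operators are detected by all pairs $(v,\eta)$, and invoke \cref{proposition:inv_designs} to remove the inversion. The only difference is presentational (you run a chain of equivalences where the paper proves one direction and reverses it), so no further comment is needed.
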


\begin{proof} 
    By Peter-Weyl theorem, see \cref{rmk:Heterotrichales}, we may assume that $\abs{\supp \Lambda} = 1$. 

    Suppose $X$ is a $\rho$-design on $G$. 
    Then $X^{-1}$ is also a $\rho$-design on $G$.  
    Hence
    \[
        \frac{1}{|X|} \sum_{x \in X} \rho(x^{-1}) = \int_{G} \rho(g^{-1}) \dd g.
    \]
    We apply both sides to $\lrangle{- v, \eta}$ for $(v \otimes \eta) \in V_\rho \otimes V_\rho^{\vee}$. 
    \[
        \frac{1}{|X|} \sum_{x \in X} \lrangle{\rho(x^{-1})v,\eta} = \int_{G} \lrangle{\rho(g^{-1})v,\eta} \dd g.
    \]
    In other words 
    \[
        \frac{1}{|X|} \sum_{x \in X} f(x) = \int_{G} f(g) \dd g.
    \]
    for every $f \in C(G)_{\rho \boxtimes \rho^\vee}$. 

    Note that for $A,B \in \End_\CC(V)$, $A = B$ if and only if $\lrangle{Av, \eta} = \lrangle{Bv,\eta}$ for every $v \in V$ and $\eta \in V^\vee$.
    Therefore we can prove the other direction by reversing the above argument.
\end{proof}

\section{Different constructions of unitary 4-designs on U(4)} \label{sec:Schistosoma}

\subsection{Unitary group}

In this subsection we prepare the irreducible representations, Gelfand pair and zonal spherical functions which are used to construct unitary designs.

We call $X$ a strong unitary $t$-design if $X$ is a $\blacksquare_{n}^{t,t}$-design on $U(n)$. 

Let $G = U(n)$ and $K = U(m) \times U(n-m)$, then $(G,K)$ is a Gelfand pair.
The Gelfand pair $(G,K) = (U(n), U(m) \times U(n-m))$ has been studied carefully. 
Recall that \cref{thm:financiery} says that the irreducible representations of $U(n)$ are characterized by dominant weights $\lambda$. 
The $U(m) \times U(n-m)$-spherical representations of $U(n)$ are indexed by some `symmetric' dominant weights. 

\begin{theorem}[{\cite[Section 12.3.2, pp. 577-578]{MR2522486}}] \label{thm:stereometer}
  Let $\rho_\lambda$ be a spherical representation of $U(n)$ with respect to $U(m) \times U(n-m)$ where $m \leq n/2$, then $\lambda$ has the form 
  \begin{equation*}
    \lambda = (\lambda_1, \ldots, \lambda_m, 0, \ldots, 0, -\lambda_m, \ldots, -\lambda_1).
  \end{equation*}
  In other words the spherical representations are characterized by integer partitions into at most $m$ parts. 
  We denote by $\tilde{\lambda} = (\lambda_1, \ldots, \lambda_m)$.
\end{theorem}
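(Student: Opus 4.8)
The plan is to read off the statement from the Cartan--Helgason theorem for the compact symmetric pair $(G,K)=(U(n),U(m)\times U(n-m))$: the ``symmetric'' shape of $\lambda$ will drop out of the vanishing half of that theorem with essentially no computation. Throughout, $V_\lambda$ denotes the representation space of $\rho_\lambda$, and recall that, $(G,K)$ being a compact symmetric pair and hence a Gelfand pair, ``$\rho_\lambda$ is $K$-spherical'' means precisely $V_\lambda^K\neq 0$, in which case $\dim V_\lambda^K=1$ automatically.

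I would first fix the standard structure. Let $\theta$ be the involution of $\mathfrak g=\mathfrak u(n)$ with fixed subalgebra $\mathfrak k=\mathfrak u(m)\oplus\mathfrak u(n-m)$, write $\mathfrak g=\mathfrak k\oplus\mathfrak p$ for its $(\pm1)$-eigenspace decomposition, choose a maximal abelian subspace $\mathfrak a\subseteq\mathfrak p$ (it has dimension $\min(m,n-m)=m$, which is where the hypothesis $m\le n/2$ is used), and extend it to a $\theta$-stable maximal torus $\mathfrak t=\mathfrak t_0\oplus\mathfrak a$ with $\mathfrak t_0=\mathfrak t\cap\mathfrak k$. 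The input is the Cartan--Helgason theorem: taking the highest weight $\lambda$ relative to a positive system compatible with $\mathfrak a$, the representation $\rho_\lambda$ is $K$-spherical if and only if (i)~$\lambda|_{\mathfrak t_0}=0$, and (ii)~$\lambda|_{\mathfrak a}$ is dominant integral for the restricted root system of $(G,K)$, i.e.\ pairs nonnegatively and integrally with the simple restricted coroots.

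The shape of $\lambda$ is then forced by (i) alone. Conjugating $K$ inside $G$ (which does not change the set of $K$-spherical representations) so that $\mathfrak t$ becomes the standard diagonal Cartan subalgebra of $\mathfrak u(n)$ and the ordering of coordinates is the standard one $\lambda_1\ge\cdots\ge\lambda_n$, one arranges that $\mathfrak a$ is the span of $E_{jj}-E_{n+1-j,\,n+1-j}$ for $1\le j\le m$, so that $\mathfrak t_0$ is the span of the $E_{jj}+E_{n+1-j,\,n+1-j}$ for $1\le j\le m$ together with the $E_{kk}$ for $m<k\le n-m$. Now $\lambda|_{\mathfrak t_0}=0$ says precisely that $\lambda_k=0$ for $m<k\le n-m$ and $\lambda_{n+1-j}=-\lambda_j$ for $1\le j\le m$; that is, $\lambda=(\lambda_1,\dots,\lambda_m,0,\dots,0,-\lambda_m,\dots,-\lambda_1)$, and dominance of $\lambda$ is equivalent to $\lambda_1\ge\cdots\ge\lambda_m\ge 0$, i.e.\ to $\tilde\lambda$ being a partition into at most $m$ parts. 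This proves the displayed implication. For the converse, that every such $\lambda$ really is spherical, I would check (ii): the restricted root system is of type $C_m$ if $n=2m$ and of type $BC_m$ if $n>2m$, one computes $\lambda|_{\mathfrak a}=2(\lambda_1,\dots,\lambda_m)$, and pairing this with the simple restricted coroots returns exactly $\lambda_1\ge\cdots\ge\lambda_m\ge 0$ plus automatic integrality. Hence $\lambda\mapsto\tilde\lambda$ is a bijection from $K$-spherical highest weights onto partitions with at most $m$ parts.

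The only delicate point I anticipate is step (ii) in the $BC_m$ case ($n>2m$), where the normalization of a short restricted root $e_i$ against its double $2e_i$ must be handled carefully; this is exactly the bookkeeping done in \cite[\S 12.3.2]{MR2522486}, which can simply be invoked. As a Lie-theory-free alternative, one can argue combinatorially: the central character of $\rho_\lambda$ must be trivial, so $\sum_i\lambda_i=0$ and, for nontrivial $\lambda$, $\lambda_n<0$; twisting by $\det^{-p}$ with $p=-\lambda_n$ reduces to the polynomial representation $\rho_\kappa$ with $\kappa=\lambda+(p^n)$, and since $\det|_K=\det_m\boxtimes\det_{n-m}$ the $\GL_n\downarrow\GL_m\times\GL_{n-m}$ branching rule gives $\dim V_\lambda^K=c^{\kappa}_{(p^m),\,(p^{n-m})}$; the classical fact that this Littlewood--Richardson coefficient equals $1$ when $\kappa_i+\kappa_{n+1-i}=2p$ for every $i$ and $0$ otherwise then translates directly back into the asserted shape of $\lambda$.
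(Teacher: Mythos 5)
The paper does not prove \cref{thm:stereometer} at all --- it simply cites Goodman--Wallach \cite[\S 12.3.2]{MR2522486} --- so there is nothing internal to compare against. Your main argument, via the Cartan--Helgason theorem for the compact symmetric pair $(U(n),U(m)\times U(n-m))$ (vanishing on $\mathfrak t_0$ forces the symmetric shape; dominance/integrality for the restricted $C_m$ or $BC_m$ system is then automatic for every partition $\tilde\lambda$ of length $\le m$), is correct and is essentially the standard route taken in the cited source, so as a replacement for the citation it is fine.

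The ``Lie-theory-free alternative'' in your last paragraph, however, has a genuine gap: the ``classical fact'' you invoke is false as stated when $n>2m$. The condition $\kappa_i+\kappa_{n+1-i}=2p$ for all $i$ does not force the middle entries $\kappa_{m+1},\dots,\kappa_{n-m}$ to equal $p$, so it admits weights that are not spherical. Concretely, take $n=4$, $m=1$, $\lambda=(2,1,-1,-2)$, $p=2$, $\kappa=(4,3,1,0)$: then $\kappa_1+\kappa_4=\kappa_2+\kappa_3=4=2p$, yet a direct enumeration of Littlewood--Richardson tableaux of shape $(4,3,1)/(2)$ with content $(2,2,2)$ gives $c^{\kappa}_{(2),(2,2,2)}=0$, consistent with \cref{thm:stereometer} (which excludes $(2,1,-1,-2)$ since its middle entries are nonzero) but contradicting your stated criterion. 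The correct statement of the fact is that $c^{\kappa}_{(p^m),(p^{n-m})}\in\{0,1\}$ and equals $1$ precisely when $\kappa_i+\kappa_{n+1-i}=2p$ for $1\le i\le m$ \emph{and} $\kappa_i=p$ for $m<i\le n-m$; with that correction the combinatorial route does recover the theorem, and as you state it it is only valid in the balanced case $n=2m$.
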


For the construction of strong unitary design, we take $\Lambda = \blacksquare_n^{t,t}$. 

The zonal spherical functions $Z_{\rho}^{(G,K)}$ with $G = U(n)$ and $K = U(m) \times U(n-m)$ are given in \cref{sec:bitterishness}. 
Some common zeroes of the zonal spherical functions (with $K = U(m) \times U(n-m)$ or $K = \mathcal{X}_2$ the complex Clifford group of genus $2$) have been computed. 
See \cref{sec:gnathotheca} for detail.

Before giving the constructions, we note that there are several techniques to shrink the size of the unitary design.

\begin{observation} \label{obsv:schoolmistressy}
  Let $X$ be a $\Lambda$-design of size $\abs{X}$. 
  Suppose the multiplicities of each element in $X$ is a multiple of $D$. 
  Then we get a $\Lambda$-design of size $\abs{X}/D$.
\end{observation}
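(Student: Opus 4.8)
The plan is to produce the smaller design by simply dividing every multiplicity in $X$ by $D$, and then to verify the defining averaging identity by a one-line linearity computation. First I would unpack the multi-set structure: write $X = \{x_1,\dots,x_N\}_{\mathrm{mult}}$ on the compact group $G$ (here $G = U(n)$) and group its points by distinct value, so that the distinct elements occurring in $X$ are $g_1,\dots,g_k \in G$ with multiplicities $m_1,\dots,m_k \in \ZZ_{\geq 1}$, whence $N = \abs{X} = \sum_{j=1}^k m_j$. By hypothesis each $m_j$ is a multiple of $D$, so $m_j = D\,m_j'$ for some $m_j' \in \ZZ_{\geq 1}$. I then define a new non-empty finite multi-set $X'$ on $G$ whose distinct elements are again $g_1,\dots,g_k$, now with multiplicities $m_1',\dots,m_k'$; its size is $\abs{X'} = \sum_j m_j' = \tfrac{1}{D}\sum_j m_j = \abs{X}/D$, which is a positive integer precisely because $\abs{X}$ is a sum of multiples of $D$.

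Next I would check the design condition. For any $\rho \in \Lambda$ (a finite-dimensional $G$-representation, acting on $V_\rho$) we have
\[
\sum_{x \in X'} \rho(x) = \sum_{j=1}^k m_j' \,\rho(g_j) = \frac{1}{D}\sum_{j=1}^k m_j\,\rho(g_j) = \frac{1}{D}\sum_{x \in X} \rho(x)
\]
in $\End_\CC(V_\rho)$, and therefore
\[
\frac{1}{\abs{X'}}\sum_{x \in X'}\rho(x) = \frac{D}{\abs{X}}\cdot\frac{1}{D}\sum_{x \in X}\rho(x) = \frac{1}{\abs{X}}\sum_{x \in X}\rho(x) = \int_G \rho(g)\dd g,
\]
where the last equality is the assumption that $X$ is a $\Lambda$-design. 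Since $\rho \in \Lambda$ was arbitrary, $X'$ is a $\Lambda$-design of size $\abs{X}/D$, as claimed.

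There is essentially no obstacle here: the only point that needs a word of care is that $X'$ be well-defined as a multi-set, i.e. that every multiplicity $m_j$ is genuinely divisible by $D$ and that the total $\abs{X}/D$ is a positive integer — both of which are immediate from the hypothesis. Everything else is the elementary linear computation above, performed identically for each $\rho$ in the collection $\Lambda$, so no representation-theoretic input beyond the definition of a $\Lambda$-design is required.
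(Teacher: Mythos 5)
Your argument is correct: dividing each multiplicity by $D$ and checking the averaging identity by linearity is exactly the elementary computation underlying this observation, which the paper states without proof as immediate. Nothing further is needed.
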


\begin{observation} \label{obsv:oystergreen}
  Let $X$ be a unitary $t$-design on $U(n)$ of size $\abs{X}$. 
  Suppose $X$ has a partition $X = \bigsqcup_{i=1}^k \omega_i Y$ where $\abs{\omega_i} = 1$ for every $i$. 
  Then $Y$ is a unitary $t$-design on $U(n)$ of size $\abs{X}/k$.
\end{observation}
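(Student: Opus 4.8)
The plan is to pass to the representation-theoretic characterization of unitary $t$-designs and then to exploit that $X$ is obtained from $Y$ by translating with \emph{central} unitaries. The starting point: by \cite[Theorem 6]{Bannai_2019} together with \cref{proposition:characterization_of_designs}, a non-empty finite multi-set $Z$ on $U(n)$ is a unitary $t$-design if and only if
\[
\sum_{z \in Z} \rho_\lambda(z) = 0 \qquad \text{in } \End_\CC(V_{\rho_\lambda})
\]
for every $\lambda \in \diagup_n^t$ for which $\rho_\lambda$ is non-trivial. So it will suffice to derive this vanishing for $Y$ from the corresponding statement for $X$.

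I would then fix such a $\lambda$. Reading $\abs{\omega_i} = 1$ as the assertion that each $\omega_i$ is a unit scalar matrix $z_i\cdot\mathrm{id}$ (with $\abs{z_i}=1$), every $\omega_i$ lies in the centre of $U(n)$, which acts on the irreducible module $V_{\rho_\lambda}$ through the character $z \mapsto z^{\abs{\lambda}}$; since $\lambda \in \diagup_n^t$ satisfies $\lambda^+ = \lambda^-$, i.e. $\abs{\lambda} = 0$, this forces $\rho_\lambda(\omega_i) = \mathrm{id}_{V_{\rho_\lambda}}$ for every $i$. Next, using that $\rho_\lambda$ is multiplicative and that $X = \bigsqcup_{i=1}^{k} \omega_i Y$,
\[
0 = \sum_{x \in X} \rho_\lambda(x) = \sum_{i=1}^{k} \sum_{y \in Y} \rho_\lambda(\omega_i)\,\rho_\lambda(y) = \sum_{i=1}^{k} \sum_{y \in Y} \rho_\lambda(y) = k \sum_{y \in Y} \rho_\lambda(y),
\]
whence $\sum_{y \in Y} \rho_\lambda(y) = 0$. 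As $\lambda$ was an arbitrary weight in $\diagup_n^t$ with $\rho_\lambda$ non-trivial (the trivial case imposing nothing), $Y$ is a unitary $t$-design on $U(n)$; its size is $\abs{X}/k$ because $\abs{X} = \sum_{i=1}^{k} \abs{\omega_i Y} = k\,\abs{Y}$.

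The only point that really needs care is the cancellation of the factor $\sum_{i=1}^{k} \rho_\lambda(\omega_i)$; this is legitimate precisely because the $\omega_i$ are central and every weight in play has $\abs{\lambda} = 0$, which makes that factor the invertible operator $k\cdot\mathrm{id}$. For an arbitrary family of non-central unitaries the sum can be singular on some $V_{\rho_\lambda}$ and the conclusion then genuinely fails; likewise the argument would not survive replacing ``unitary $t$-design'' by ``strong unitary $t$-design'', because a weight $\lambda \in \blacksquare_n^{t,t}$ need not satisfy $\abs{\lambda} = 0$. (As a side remark, in the sub-case where all the $\omega_i$ coincide with one scalar $\omega$ one can avoid representation theory: $X = \bigsqcup_{i=1}^{k} \omega Y$ has every multiplicity divisible by $k$, so \cref{obsv:schoolmistressy} already gives the unitary $t$-design $\omega Y$ of size $\abs{X}/k$, and then $Y = \omega^{-1}(\omega Y)$ is a unitary $t$-design of the same size since left translates of designs are designs.)
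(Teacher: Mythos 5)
Your proof is correct, but it runs along a different track from the paper's. The paper disposes of this observation in one line, directly from the tensor-power definition of a unitary $t$-design: since each $\omega_i$ is a unit phase, $(\omega_i U)^{\otimes t} \otimes ((\omega_i U)^{\dagger})^{\otimes t} = U^{\otimes t} \otimes (U^{\dagger})^{\otimes t}$, so averaging over $X = \bigsqcup_{i=1}^k \omega_i Y$ gives exactly the average over $Y$, and the design property transfers; no representation theory is needed. You instead pass to the irreducible-representation characterization via $\diagup_n^t$ and \cref{proposition:characterization_of_designs}, and cancel the factor $\sum_i \rho_\lambda(\omega_i) = k\,\mathrm{id}$ using the central character $z \mapsto z^{\abs{\lambda}}$ together with $\abs{\lambda} = 0$ for every $\lambda \in \diagup_n^t$; this is the same phase-cancellation phenomenon, just witnessed one level deeper, on each irreducible constituent rather than on $U^{\otimes t} \otimes (U^{\dagger})^{\otimes t}$ itself. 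What your route buys is precisely the explicit explanation of \emph{why} the statement is special to plain (not strong) unitary designs: a weight in $\blacksquare_n^{t,t}$ may have $\abs{\lambda} \neq 0$, the central character is then non-trivial, and the cancellation genuinely fails — which is consistent with the paper only ever using this observation to shrink a strong design into an ordinary one. One cosmetic point: the characterization you quote from \cite[Theorem 6]{Bannai_2019} is stated in the paper for finite subsets, whereas the observation concerns multi-sets; the equivalence extends verbatim (and the paper uses it that way), but it would be cleaner to say so explicitly rather than cite it as if it already covered multi-sets.
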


\begin{proof}
  It follows from $(\omega U)^{\otimes t} \otimes ((\omega U)^{\dagger})^{\otimes t} = U^{\otimes t} \otimes (U^\dagger)^{\otimes t}$.
\end{proof}

\subsection{Inductive construction}

\begin{example} \label{exmp:undonnish}
  $X_1 = \set{1, \omega, \omega^2, \omega^3, \omega^4}$, where $\omega$ is a primitive $5$-th root of unity, is a strong unitary $4$-design on $U(1)$ of size $5$. 
\end{example}

\begin{example} \label{exmp:ancistrocladaceous}
  Let $X_{1,1} = \set*{\begin{bmatrix}
    g & 0 \\
    0 & h
  \end{bmatrix} : g,h \in X_1}$ and its size is $25$. 
  Note that 
  \begin{align*}
    \blacksquare_{2}^{4,4} \cap \widehat{U(2)}^*_{U(1) \times U(1)}&= \set{(1,-1),(2,-2),(3,-3),(4,-4)} \\
    &= \set{(1,-1),(3,-3)} \sqcup \set{(2,-2)} \sqcup \set{(4,-4)}.
  \end{align*}
  By \cref{corollary:const_gen,obsv:schoolmistressy,exmp:nephroerysipelas}, we have a strong unitary $4$-design $X_2$ on $U(2)$ of size $25^4/5^3 = 5^{5}$. 
  Then we get a unitary $4$-design on $U(2)$ of size $5^4$ by \cref{obsv:oystergreen}.
\end{example}

\begin{example} \label{exmp:toperdom}
  Let $X_{2,2} = \set*{\begin{bmatrix}
    g & 0 \\
    0 & h
  \end{bmatrix} : g, h \in X_2}$ and its size is $5^{5} \times 5^{5} = 5^{10}$. 
  Note that 
  \begin{align*}
    \blacksquare_{4}^{4,4} \cap \widehat{U(4)}^*_{U(2) \times U(2)}
    =& \set{(1,0,0,-1),(3,0,0,-3),(2,1,-1,-2),(3,1,-1,-3)} \\
    &\bigsqcup \set{(2,0,0,-2),(1,1,-1,-1)}\\
    &\bigsqcup \set{(4,0,0,-4),(2,2,-2,-2)}.
  \end{align*}
  By \cref{corollary:const_gen,obsv:schoolmistressy,exmp:toreador,exmp:kelebe,exmp:chylifactive}, we have a strong unitary $4$-design $X_4$ on $U(4)$ of size $(5^{10})^4/5^3 = 5^{37}$. 
  Then we get a unitary $4$-design on $U(4)$ of size $5^{36}$ by \cref{obsv:oystergreen}.
\end{example}

\subsection{Construction by finite group} \label{sec:pectineus}

We will display several known construction of unitary design. 
Then we will construct a unitary $4$-design by complex Clifford group. 
The character formulas are useful to verify whether a group is a unitary design.

\begin{theorem} \label{thm:bedragglement}
  Let $\chi_\mu$ be the character of irreducible representation $\rho_\mu$ of $U(n)$. Let $\lambda_1, \lambda_2, \ldots, \lambda_n$ be the eigenvalues of an element $g \in U(n)$. 
  The character is given by Schur polynomial. 
  \begin{equation}
    \chi_\mu(g) = S_{\mu}(\lambda_1, \ldots, \lambda_n).
  \end{equation}
\end{theorem}

\begin{remark}
  The Schur polynomial is a symmetric polynomial. 
  One can use Newton-Girard formulae to transform it into a polynomial in terms of power sums of $\lambda_1, \ldots, \lambda_n$. 
  Since $\sum_{i=1}^n \lambda_i^k = \Tr (g^k)$, we can avoid the computation of eigenvalues.
\end{remark}

\begin{example} \label{exmp:spiricle}
  By computation we have $\lrangle{\chi_\mu |_{SL(2,5)} , 1}_{SL(2,5)} = \lrangle{\chi_\mu , 1}_{U(2)}$ for every $\mu \in \blacksquare_{2}^{5,5}$. 
  Therefore $SL(2,5)$ is a strong unitary $5$-design on $U(2)$ of size $120$. 
  Then we get a unitary $5$-design on $U(2)$ of size $60$ by \cref{obsv:oystergreen}.
\end{example}

\begin{example} \label{exmp:reacknowledge}
  Let $X_{2,2} = \set*{\begin{bmatrix}
    g & 0 \\
    0 & h
  \end{bmatrix} : g, h \in SL(2,5)}$ and its size is $120 \times 120 = 120^2$. 
  By the same construction in \cref{exmp:toperdom}, we have a strong unitary $4$-design $X_4$ on $U(4)$ of size $(120^{2})^4/2^3 = 2^{21} 3^{8} 5^{8}$. 
  Then we get a unitary $4$-design on $U(4)$ of size $2^{20} 3^{8} 5^{8}$ by \cref{obsv:oystergreen}.
\end{example}

\begin{example} \label{exmp:nemocerous}
  \cite[Example 17]{Bannai_2019} gives a construction of unitary $3$-design on $U(3)$ by $SL(3,2)$ which is of size $168^2/4 = 7,056$. 
\end{example}

\begin{example} \label{exmp:resorcinum}
  \cite[Example 18]{Bannai_2019} gives a construction of unitary $4$-design on $U(4)$ by $Sp(4,3)$ which is of size $25,920^2/6 = 447,897,600$. 
\end{example}

The complex Clifford group $\mathcal{X}_2 \leq U(4)$ is a unitary $3$-design \cite{Webb:2016:CGF:3179439.3179447} and fails gracefully to be a unitary $4$-design \cite{1609.08172}. 
Using \cref{thm:bedragglement}, we can compute
\begin{equation}
  \lrangle{\chi_\mu |_{\mathcal{X}_2} , 1} =
  \begin{cases}
    0, & \mu \in \blacksquare_{4}^{4,4} \backslash \set{(4,0,0,-4), (2,2,-2,-2), (0,0,0,0)}; \\
    1, & \mu = (4,0,0,-4) \text{ or } (2,2,-2,-2) \text{ or } (0,0,0,0).
  \end{cases}
\end{equation}
This implies that $\mathcal{X}_2$ is a $\rho_\mu$-design for every $\mu \in \blacksquare_4^{4,4}$ except $\mu = (4,0,0,-4)$ and $\mu = (2,2,-2,-2)$. 
And the trivial representation of $\mathcal{X}_2$ is of multiplicity $1$ in the decomposition of $\rho_{(4,0,0,-4)}$ and $\rho_{(2,2,-2,-2)}$ into irreducible representations of $\mathcal{X}_2$. 
By \cref{corollary:const_gen}, $\mathcal{X}_2 g_{(4,0,0,-4)} \mathcal{X}_2 g_{(2,2,-2,-2)} \mathcal{X}_2$ is a strong unitary $4$-design where $g_{\lambda}$ is a zero of the $\mathcal{X}_2$-biinvariant function in $\rho_\lambda$. 

\begin{example} \label{exmp:photoceramist}
  $\mathcal{X}_2 g_{(4,0,0,-4)} \mathcal{X}_2 g_{(2,2,-2,-2)} \mathcal{X}_2$ is a strong unitary $4$-design on $U(4)$ of size $\abs{\mathcal{X}_2}^3$.
\end{example}

\begin{example}
  \cref{exmp:photoceramist} might be improved by \cref{obsv:schoolmistressy,obsv:oystergreen}. 
  If the $\mathcal{X}_2$-biinvariant functions in $\rho_{(4,0,0,-4)}$ and $\rho_{(2,2,-2,-2)}$ has a common zero $g_c$, then $\mathcal{X}_2 g_c \mathcal{X}_2$ is a strong unitary $4$-design (unitary $4$-design) on $U(4)$ of size $92,160^2 / 8 = 1,061,683,200$ (of size $92,160^2 / 8^2 = 132,710,400$).
\end{example}

\subsection{Comparison between different unitary designs}

We denote by $L(n,t)$ the minimum size of a strong unitary $t$-design on $U(n)$ and by $l(n,t)$ the minimum size of a unitary $t$-design on $U(n)$. 
By \cref{corollary:const_gen} we have the following theorem.

\begin{theorem} \label{thm:tectocephalic}
  $l(n,t) \leq L(n,t) \leq (L(m,t)L(n-m,t))^{\abs{\widetilde{\Lambda}(m,t) }+1}$ for $1 \leq m \leq n/2$.
\end{theorem}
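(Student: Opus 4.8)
The statement to prove is Theorem~\ref{thm:tectocephalic}: $l(n,t) \leq L(n,t) \leq (L(m,t)L(n-m,t))^{\abs{\widetilde{\Lambda}(m,t)}+1}$ for $1 \leq m \leq n/2$. The first inequality $l(n,t) \leq L(n,t)$ is immediate, since a strong unitary $t$-design is in particular a unitary $t$-design (the collection $\diagup_n^t$ of representations characterizing unitary $t$-designs is contained in $\blacksquare_n^{t,t}$, so any $\blacksquare_n^{t,t}$-design is a $\diagup_n^t$-design). So the content is the second inequality, and the plan is to apply \cref{corollary:const_gen} with $G = U(n)$, $K = U(m) \times U(n-m)$, and $\Lambda = \blacksquare_n^{t,t}$.

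First I would set up the input design on $K$. Take a strong unitary $t$-design $Y_m$ on $U(m)$ of size $L(m,t)$ and a strong unitary $t$-design $Y_{n-m}$ on $U(n-m)$ of size $L(n-m,t)$; by \cref{proposition:designs_on_products} the product multi-set $Y := Y_m \times Y_{n-m}$, viewed as a multi-set of block-diagonal matrices in $K \subset U(n)$, is a $(\blacksquare_m^{t,t} \boxtimes \blacksquare_{n-m}^{t,t})$-design on $K$, and in particular it is a $(\Lambda|_K)$-design on $K$: indeed for $\rho_\lambda$ with $\lambda \in \blacksquare_n^{t,t}$, the restriction $\rho_\lambda|_K$ decomposes into irreducibles $\rho_\mu \boxtimes \rho_\nu$ of $U(m) \times U(n-m)$, and the branching rule (interlacing) forces $\mu \in \blacksquare_m^{t,t}$ and $\nu \in \blacksquare_{n-m}^{t,t}$, so each such constituent is killed by $Y$. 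Its size is $\abs{Y} = L(m,t)L(n-m,t)$.

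Next I would control the number of "layers" $l$ in the decomposition $(\supp \Lambda) \cap \widehat{G}_K^* = \bigsqcup_{i=1}^l \Upsilon_i$ from \cref{corollary:const_gen}(2). By \cref{thm:stereometer}, the $K$-spherical irreducible representations of $U(n)$ are indexed by partitions $\widetilde\lambda = (\lambda_1,\dots,\lambda_m)$ into at most $m$ parts; intersecting with $\blacksquare_n^{t,t}$ and removing the trivial representation gives exactly the index set I would call $\widetilde{\Lambda}(m,t)$, of cardinality $\abs{\widetilde{\Lambda}(m,t)}$. The crudest choice is the singleton decomposition $l = \abs{\widetilde{\Lambda}(m,t)}$, with each $\Upsilon_i$ a single spherical representation, and for each such $\rho$ we can take $\Omega_i = \{\omega_i\}_{\mathrm{mult}}$ for a zero $\omega_i \in U(n)$ of $Z_\rho^{(G,K)}$ (or $\Omega_i = \{\omega_i,\omega_i^{-1}\}_{\mathrm{mult}}$ for a zero of the real part), whose existence is guaranteed by \cref{proposition:zero_zonal}. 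Then \cref{corollary:const_gen} yields the $\Lambda$-design $X = Y \prod_{i=1}^l (\Omega_i \cdot Y)$, a strong unitary $t$-design on $U(n)$. Counting sizes: $\abs{X} = \abs{Y} \cdot \prod_{i=1}^l (\abs{\Omega_i}\abs{Y})$, so if every $\abs{\Omega_i} = 1$ this is $\abs{Y}^{l+1} = (L(m,t)L(n-m,t))^{\abs{\widetilde{\Lambda}(m,t)}+1}$, giving the claimed bound on $L(n,t)$, hence on $l(n,t)$.

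The main obstacle is the case analysis hidden in \cref{proposition:zero_zonal}: some zonal spherical functions may have no genuine zero, only a zero of the real part, forcing $\abs{\Omega_i} = 2$ and threatening to inflate the exponent. The clean fix is to group every such "Condition (2)" representation $\rho$ together with another representation $\rho'$ whose $Z_{\rho'}^{(G,K)}$ does vanish at a point where $\Re Z_\rho^{(G,K)}$ also vanishes — or, more simply, to observe that one may always enlarge $\Omega_i$ from $\{\omega\}$ to a multi-set of size $\abs{Y}$ by absorbing an extra factor of $Y$, since $Y \cdot \Omega_i \cdot Y = (Y \cdot \{\omega\}) \cdot Y$ and $Y \cdot \{\omega\}$ has size $\abs{Y}$; this keeps every "effective $\Omega_i$" of size dividing $\abs{Y}$ and never worsens the exponent beyond $\abs{\widetilde\Lambda(m,t)}+1$. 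I would also remark that the bound is typically far from tight — \cref{obsv:schoolmistressy} and \cref{obsv:oystergreen} shave off factors, and merging $\Upsilon_i$'s with common zeros reduces $l$ — but the stated inequality only needs the crude singleton decomposition, so the proof is essentially the bookkeeping above.
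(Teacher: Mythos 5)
Your overall route is the paper's route: the first inequality is trivial, and the second comes from \cref{corollary:const_gen} applied to $G=U(n)$, $K=U(m)\times U(n-m)$, $\Lambda=\blacksquare_n^{t,t}$, with $Y=Y_m\times Y_{n-m}$ (a $(\Lambda|_K)$-design by \cref{proposition:designs_on_products} and the branching argument you give) and the singleton decomposition of the nontrivial spherical representations, indexed by $\widetilde{\Lambda}(m,t)$ via \cref{thm:stereometer}, so that $\abs{X}=\abs{Y}^{\,l+1}$ with $l=\abs{\widetilde{\Lambda}(m,t)}$.

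There is, however, a genuine gap in how you close the size count. The stated exponent requires $\abs{\Omega_i}=1$ for \emph{every} spherical $\rho$, i.e.\ that each $Z_\rho^{(G,K)}$ has an actual zero (Condition (1)), not merely a zero of its real part; if some $\Omega_i=\{\omega_i,\omega_i^{-1}\}_{\mathrm{mult}}$ were needed, the construction has size $2^k\abs{Y}^{\,l+1}$, exceeding the claimed bound. Neither of your two fixes repairs this: grouping such a $\rho$ with a $\rho'$ vanishing at a common zero of $\Re Z_\rho^{(G,K)}$ does not give $\sum_{z\in\Omega_i}Z_\rho^{(G,K)}(z)=0$ for a singleton $\Omega_i$ (you only control the real part), and the ``absorb a factor of $Y$'' remark only rewrites the case $\abs{\Omega_i}=1$ and does not reduce the extra factor $2$ when $\abs{\Omega_i}=2$. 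The correct way to close the gap is to note that for the symmetric pair $(U(n),U(m)\times U(n-m))$ the zonal spherical functions are real-valued: since the pair is symmetric, $\omega^{-1}\in K\omega K$, so by \cref{proposition:inv_conj} $Z_\rho^{(G,K)}(\omega)=Z_\rho^{(G,K)}(\omega^{-1})=\overline{Z_\rho^{(G,K)}(\omega)}$ (equivalently, they are real polynomials in the principal-angle parameters $y_1,\dots,y_m$, as in \cref{sec:bitterishness}). Hence \cref{proposition:zero_zonal}(1) applies, every $\Omega_i$ may be taken to be a single zero point, and the bound $L(n,t)\leq(L(m,t)L(n-m,t))^{\abs{\widetilde{\Lambda}(m,t)}+1}$ follows as you computed.
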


\begin{corollary}

  \begin{align*}
    L(1,t) & = t+1 \\
    L(2,t) &\leq (t+1)^{2(t+1)} \\
    L(3,t) &\leq (t+1)^{(2(t+1)+1)(t+1)} \\
    L(4,t) &\leq (t+1)^{((2(t+1)+1)(t+1)+1)(t+1)} \\
    L(4,t) &\leq (t+1)^{4(t+1)(\abs{\widetilde{\Lambda}(2,t)}+1)}
  \end{align*}

In particular we have $L(2,4) \leq 5^{10}$, $L(3,3) \leq 4^{36}$, $L(3,4) \leq 5^{55}$, $L(4,4) \leq 5^{280}$ and $L(4,4) \leq 5^{180}$. 
\end{corollary}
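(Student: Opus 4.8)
The plan is to obtain all of the stated inequalities by iterating the recursion of \cref{thm:tectocephalic}, once two ingredients are in place: the base value $L(1,t)=t+1$, and the cardinalities $\abs{\widetilde{\Lambda}(m,t)}$ for $m=1$ (all $t$) and for $m=2$, $t=4$.

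First I would settle the base case $L(1,t)=t+1$. For $n=1$ one has $\blacksquare_1^{t,t}=\set{(k):-t\le k\le t}$ and $\rho_{(k)}$ is the character $z\mapsto z^{k}$ of $U(1)$, so a strong unitary $t$-design on $U(1)$ is precisely a finite multi-set $X\subset U(1)$ with $\sum_{z\in X}z^{k}=0$ for $1\le k\le t$. The upper bound $L(1,t)\le t+1$ comes from \cref{exmp:undonnish} generalised: if $\omega$ is a primitive $(t+1)$-th root of unity then $X=\set{1,\omega,\dots,\omega^{t}}$ satisfies $\sum_{j=0}^{t}\omega^{jk}=0$ whenever $\omega^{k}\ne 1$, i.e.\ for $1\le k\le t$. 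For the lower bound, I would suppose $X=\set{z_1,\dots,z_N}$ is such a design with $N\le t$; then the power sums $p_1,\dots,p_N$ of $z_1,\dots,z_N$ all vanish, Newton's identities give $e_1=\dots=e_N=0$, hence $\prod_{i=1}^{N}(z-z_i)=z^{N}$ and every $z_i=0$, contradicting $\abs{z_i}=1$. So $N\ge t+1$.

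Next I would pin down $\widetilde{\Lambda}(m,t)$, the index set of $\blacksquare_n^{t,t}\cap\widehat{U(n)}^{*}_{U(m)\times U(n-m)}$. By \cref{thm:stereometer} its elements correspond to nontrivial symmetric weights $\lambda=(\lambda_1,\dots,\lambda_m,0,\dots,0,-\lambda_m,\dots,-\lambda_1)$ with $\lambda_1\ge\dots\ge\lambda_m\ge0$, and for these $\lambda^{+}=\lambda^{-}=\lambda_1+\dots+\lambda_m$, so the constraint $\lambda\in\blacksquare_n^{t,t}$ reads $1\le\lambda_1+\dots+\lambda_m\le t$. Thus $\abs{\widetilde{\Lambda}(m,t)}$ is the number of partitions into at most $m$ parts of size between $1$ and $t$: for $m=1$ these are $(1),\dots,(t)$, giving $\abs{\widetilde{\Lambda}(1,t)}=t$, and for $m=2$, $t=4$ they are $(1),(2),(3),(4),(1,1),(2,1),(3,1),(2,2)$, giving $\abs{\widetilde{\Lambda}(2,4)}=8$ (the eight weights listed in \cref{exmp:toperdom}).

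Finally I would assemble the bounds from \cref{thm:tectocephalic}: with $m=1$ and $n=2,3,4$ in succession, using $\abs{\widetilde{\Lambda}(1,t)}=t$, one gets $L(2,t)\le(L(1,t)^{2})^{t+1}=(t+1)^{2(t+1)}$, then $L(3,t)\le(L(1,t)L(2,t))^{t+1}\le(t+1)^{(2(t+1)+1)(t+1)}$, then $L(4,t)\le(L(1,t)L(3,t))^{t+1}\le(t+1)^{((2(t+1)+1)(t+1)+1)(t+1)}$, while $m=2$, $n=4$ gives $L(4,t)\le(L(2,t)^{2})^{\abs{\widetilde{\Lambda}(2,t)}+1}\le(t+1)^{4(t+1)(\abs{\widetilde{\Lambda}(2,t)}+1)}$; substituting the specific values together with $\abs{\widetilde{\Lambda}(1,t)}=t$ and $\abs{\widetilde{\Lambda}(2,4)}=8$ produces $L(2,4)\le 5^{10}$, $L(3,3)\le 4^{36}$, $L(3,4)\le 5^{55}$, $L(4,4)\le 5^{280}$ and $L(4,4)\le 5^{180}$. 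The only step that is more than bookkeeping is the lower bound $L(1,t)\ge t+1$ via Newton's identities; the rest is iterating \cref{thm:tectocephalic} and enumerating small partitions.
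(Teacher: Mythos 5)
Your proposal is correct and matches the route the paper intends: the corollary is just \cref{thm:tectocephalic} iterated with $m=1$ (and once with $m=2$, $n=4$), together with $L(1,t)=t+1$ and the counts $\abs{\widetilde{\Lambda}(1,t)}=t$, $\abs{\widetilde{\Lambda}(2,4)}=8$, exactly as you compute. Your Newton's-identities argument for the lower bound $L(1,t)\geq t+1$ is a correct (and welcome) filling-in of a step the paper leaves implicit; otherwise the two arguments coincide.
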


\begin{remark} \label{rmk:radiative}
  The size of $\widetilde{\Lambda}(m,t)$ can be estimated by partition function. 
  Let $p(k)$ be the number of possible partitions of a non-negative integer $k$. 
  It is known that 
  \begin{equation}
    p(k) \sim \frac{1}{4k \sqrt{3}} \exp \lrpar*{\pi \sqrt{\frac{2k}{3}}} \text{ as } k \to \infty
  \end{equation}
  This gives us $\abs{\widetilde{\Lambda}(m,t)} \leq \sum_{k=1}^t p(k)$. 
  The equality holds for $t \leq m$.
  On the other hand, if we fix $m$, then $$\abs{\widetilde{\Lambda}(m,t)} \leq \sum_{k=1}^t \binom{m+k-1}{m-1} = \binom{m+t}{m} - 1 = \mathcal{O}(t^m).$$
\end{remark}

\begin{example} \label{exmp:sortation}
  \cref{exmp:undonnish,exmp:ancistrocladaceous,exmp:toperdom} gives us the following upper bounds.
  \begin{align*}
    L(1,4) &= 5 & l(1,4) &= 1\\
    L(2,4) &\leq 25^4 / 5^3 = 5^5 & l(2,4) &\leq 5^5 / 5 = 5^4\\ 
        L(4,4) &\leq (5^5 \times 5^5)^4 / 5^3 = 5^{37} & l(4,4) &\leq 5^{37} / 5 = 5^{36}.
  \end{align*}
\end{example}

\begin{example}
  \cref{exmp:spiricle,exmp:reacknowledge} gives us the following upper bounds.
  \begin{align*}
    L(2,4) &\leq L(2,5) \leq 120 & l(2,4) &\leq l(2,5) \leq 120/2 = 60 \\
        L(4,4) &\leq (120^2)^4 / 2^3 = 2^{21} 3^{8} 5^{8} & l(4,4) &\leq (120^2)^4 / 2^4 = 2^{20} 3^{8} 5^{8}
  \end{align*}
\end{example}

\begin{example}
  \cref{exmp:nemocerous,exmp:resorcinum} shows that $l(3,3) \leq 7,056$ and $l(4,4) \leq 447,897,600$.
\end{example}

We have $132,710,400 < 447,897,600 < 2^{20} 3^{8} 5^{8} < 5^{36}$.
Therefore the construction by complex Clifford group $\mathcal{X}_2$ is the smallest if the common zero exists.

\section{Design on orthogonal group and sphere} \label{sec:Liassic}

The method of inductive construction applies to orthogonal group as well.

To apply \cref{corollary:const_gen} we need the following theorem on irreducible representations of the orthogonal group. 

\begin{theorem} \label{thm:exacerbescent}
  \begin{enumerate}
    \item \cite[Theorem 5.7A, 5.7C]{MR0000255} The irreducible representations of $O(n)$ are indexed by admissible Young diagram $\lambda$ such that $f_1 + f_2 \leq n$ where $f_i$ is the number of squares in the $i$-th column of $\lambda$.
    Two admissible Young diagrams $\lambda$ and $\lambda'$ are called associated if $f_1 + f_1' = n$ and $f_i = f_i'$ for $i \geq 2$. 
    In particular, if $\lambda$ is associated to itself, then we call $\lambda$ self-associated. 
    \item \cite[Theorem 5.9A]{MR0000255} Under the restriction to special orthogonal group $SO(n)$, each irreducible representation $\rho_\lambda$ of $O(n)$ stays irreducible unless $\lambda$ is self-associated, in which case it decomposes into two irreducible representation of equal degree. 
    Associated representations become equivalent but no other equivalence are created. 
    \item \cite[Theorem 5.9A]{MR0000255} The irreducible representations of $SO(2p+1)$ are indexed by integer sequences $m : m_1 \geq m_2 \geq \cdots \geq m_p \geq 0$ and the irreducible representations of $SO(2p)$ are indexed by integer sequences $m : m_1 \geq m_2 \geq \cdots \geq m_{p-1} \geq \abs{m_p}$. 
    Here $m_i$ corresponds to the number of squares in the $i$-th row of admissible Young digram $\lambda$. 
    The negative $m_p$ accounts for the branching of self-associated Young digram. 
    \item \cite[Theorem 12.1b]{MR0148766} Let $m : m_1 \geq m_2 \geq \cdots \geq m_p \geq 0$ and $m' : m_1' \geq m_2' \geq \cdots \geq m_{p-1}' \geq \abs{m_p'}$ corresponds to irreducible representations of $SO(2p+1)$ and $SO(2p)$ respectively. 
    We have the following decomposition. 
    \begin{equation*}
      \rho_m = \bigoplus_{m'} \rho_{m'}
    \end{equation*}
    where the summation goes through $m'$ such that $m_1 \geq m_1' \geq m_2 \geq m_2' \geq \cdots \geq m_p \geq \abs{m_p'}$. 
    \item \cite[Theorem 12.1a]{MR0148766} Let $m : m_1 \geq m_2 \geq \cdots \geq \abs{m_p}$ and $m' : m_1' \geq m_2' \geq \cdots \geq m_{p-1}' \geq 0$ corresponds to irreducible representations of $SO(2p)$ and $SO(2p-1)$ respectively. 
    We have the following decomposition. 
    \begin{equation*}
      \rho_m = \bigoplus_{m'} \rho_{m'}
    \end{equation*}
    where the summation goes through $m'$ such that $m_1 \geq m_1' \geq m_2 \geq m_2' \geq \cdots \geq m_{p-1} \geq \abs{m_p}$.
    \item \cite[Theorem 3.2]{MR1941981} \cite[Section 12.3.2, pp. 575-576]{MR2522486} Let $\rho_\lambda$ be a spherical representation of $O(n)$ with respect to $O(m) \times O(n-m)$ where $m \leq n/2$, then $\lambda$ has the form 
    \begin{equation*}
      \lambda = (\lambda_1, \ldots, \lambda_m, 0, \ldots, 0)
    \end{equation*}
    where $\lambda_i \equiv 0 \pmod 2$ for every $i$.
    In other words the spherical representations are characterized by even integer partitions into at most $m$ parts. 
  \end{enumerate}
\end{theorem}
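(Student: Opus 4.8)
The plan is to handle the six items in two groups. Items (1)--(5) are intrinsic facts about the representation theory of $O(n)$, $SO(n)$ and their successive restrictions, and I would derive them from highest-weight theory together with Weyl's tensor construction; item (6) is a statement about the compact Gelfand pair $(O(n), O(m)\times O(n-m))$ and is the one place where the machinery of \cref{subsection:ZSF} is genuinely used.

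For (1)--(3) I would begin with the connected group $SO(n)$, whose irreducible representations are parametrized by dominant integral weights; in the standard coordinates of the root systems $B_p$ ($n=2p+1$) and $D_p$ ($n=2p$) these are exactly the sequences $m_1\geq\cdots\geq m_p\geq 0$, respectively $m_1\geq\cdots\geq m_{p-1}\geq|m_p|$, which is (3). Weyl's construction realizes each such representation inside a tensor power of the defining representation, by a Young symmetrizer followed by projection onto traceless tensors, and the admissibility condition $f_1+f_2\leq n$ is precisely the non-vanishing criterion for that projection, giving the classification in (1). To pass from $SO(n)$ to $O(n)$ I would invoke Clifford theory for the index-two normal subgroup $SO(n)\trianglelefteq O(n)$: each irreducible $SO(n)$-module either admits two inequivalent extensions to $O(n)$ or induces irreducibly, and deciding which case occurs --- by examining how the non-identity component of $O(n)$ acts on highest-weight lines --- yields the associated / self-associated dichotomy and the splitting behaviour in (2).

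For the branching rules (4) and (5) the efficient route is the Weyl character formula: restrict $\chi^{SO(n)}_m$ to a maximal torus of $SO(n-1)$, write it as a ratio of alternating sums, divide by the Weyl denominator of the smaller group, and read off the result as a combination of the $\chi^{SO(n-1)}_{m'}$. The combinatorics of this division forces every multiplicity to be $0$ or $1$, with support exactly on the $m'$ obeying the stated interlacing inequalities; equivalently, one may quote the Gelfand--Tsetlin parametrization for the orthogonal groups, into which the interlacing pattern is built by construction.

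For (6) I would view $\mathcal{G}_{m,n}=O(n)/(O(m)\times O(n-m))$ as a compact symmetric space and apply the Cartan--Helgason theorem: $\rho_\lambda$ is $O(m)\times O(n-m)$-spherical if and only if its highest weight $\lambda$ annihilates the part of the Cartan subalgebra orthogonal to the restricted roots and pairs to a nonnegative integer with each restricted coroot. The restricted root system of this Grassmannian is of type $BC_m$ (with $m\leq n/2$); carrying out these two conditions against its roots $e_i\pm e_j$ and $2e_i$ forces $\lambda$ to be supported on its first $m$ coordinates and all of those coordinates to be even, the evenness being exactly the integrality requirement against the coroots dual to the long roots $2e_i$. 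Since such a $\lambda$ has at most $m\leq n/2$ rows its Young diagram is never self-associated, so by (2) the $O(n)$- and $SO(n)$-pictures match and the symmetric-space computation transfers back to $O(n)$ verbatim. I expect the main obstacle to be purely organizational --- keeping the several coordinate systems (Young-diagram rows, highest weights, restricted roots) aligned and handling the even/odd $n$ and the $O$-versus-$SO$ distinctions without parity or sign slips --- since each individual assertion is classical and is available in the sources cited in the statement.
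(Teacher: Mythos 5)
The paper itself offers no argument for this theorem: it is a compilation of classical results quoted with citations (Weyl for (1)--(3), the classical branching theorems for (4)--(5), and James--Constantine / Goodman--Wallach for (6)), so any comparison is with the literature you would be re-deriving. Your outline for items (1)--(5) is the standard route (highest weights for $B_p$/$D_p$, Weyl's traceless-tensor construction with the admissibility criterion $f_1+f_2\le n$, Clifford theory for $SO(n)\trianglelefteq O(n)$, and Weyl-character-formula or Gelfand--Tsetlin branching) and is sound in outline.

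Item (6), however, contains a genuine error in the mechanism you propose. The restricted root system of the \emph{real} Grassmannian $O(n)/(O(m)\times O(n-m))$ is of type $B_m$ when $n>2m$ (roots $e_i\pm e_j$ with multiplicity $1$ and $e_i$ with multiplicity $n-2m$) and of type $D_m$ when $n=2m$; it is the \emph{complex} Grassmannian that has type $BC_m$. There are no restricted roots $2e_i$, so the "integrality against the coroots dual to the long roots $2e_i$" that you invoke to force $\lambda_i\equiv 0\pmod 2$ does not exist. Worse, the corrected Cartan--Helgason computation for the \emph{connected} pair $(SO(n),SO(m)\times SO(n-m))$ only yields the condition that all $\lambda_i$ have the \emph{same} parity (from $\langle\lambda,e_i\pm e_j\rangle/\langle e_i\pm e_j,e_i\pm e_j\rangle\in\ZZ$); it does not give evenness. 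The evenness comes precisely from the disconnectedness of $K=O(m)\times O(n-m)$ (and of $O(n)$), which your argument never uses: compare $m=1$, where $SO(n)/SO(n-1)=S^{n-1}$ has spherical weights $(k,0,\dots,0)$ for \emph{all} $k$, while $O(n)/(O(1)\times O(n-1))=\mathbb{R}P^{n-1}$ retains only even $k$ because the $O(1)$ factor acts by $(-1)^k$ on the would-be invariant vector. So a correct proof must supplement Cartan--Helgason for the connected pair with an analysis of the component group of $K$ (or simply quote the cited decomposition of $L^2$ of the real Grassmannian). Finally, your shortcut "such a $\lambda$ has at most $m\le n/2$ rows so it is never self-associated" fails in the boundary case $m=n/2$ with all $\lambda_i>0$ (e.g.\ $\lambda=(2,2)$ for $O(4)$, $m=2$), so the transfer between the $O(n)$- and $SO(n)$-pictures also needs more care there.
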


We can take $\Lambda(n,t) = \set{\lambda : \abs{\lambda} \leq t, f_1 + f_2 \leq n}$ and apply \cref{corollary:const_gen} to construct orthogonal $t$-design. 
Zonal spherical functions for $O(n)/O(m) \times O(n-m)$ are given in \cite[Theorem 15.1]{MR0374523}.
Every orbit of an orthogonal $t$-design is a spherical $t$-design because $\set{(k,0, \ldots, 0) : 0 \leq k \leq t} \subset \Lambda(n,t) $.

\section{Discussion} \label{sec:swainsona}

If we take $m=1$ in the inductive construction, then the zonal spherical function with respect to the complex Grassmannian $\mathcal{G}_{1,n}$ is a polynomial in one variable $y$. 
Therefore the root is an algebraic number. 
But this would result in very large designs as suggested by \cref{thm:tectocephalic}. 
If we take $m \approx n/2$, the size of the design could be smaller. 
Though we may not be able to express the unitary matrix by algebraic numbers, we can use bisection method to obtain as much precision as we want. 
The existence of exact unitary design close to the numerical approximation is guaranteed, which is not true for approximate unitary design. 
Our inductive construction also gives an upper bound of the size of unitary design. 
It would be interesting to study the asymptotic lower bound on the size of unitary designs just as the case of spherical designs.

\section*{Acknowledgement}

The authors are grateful to Shingo Kukita, Mikio Nakahara, Yan Zhu for useful discussion and comments. 
We thank the following universities for providing us the place of discussions on this and related topics: Shanghai Jiao Tong University in particular Yaokun Wu, Shanghai University in particular Mikio Nakahara, and TGMRC in China Three Gorges University in Yichang in particular Zongzhu Lin. 
YN was supported by JST, PRESTO Grant Number JPMJPR1865.

\bibliographystyle{plain}

\bibliography{construction}

\appendix

\section{zonal polynomials on complex Grassmannian} \label{sec:bitterishness}

\subsection{general formula of zonal polynomials on complex Grassmannian}

The cosets of $U(m) \times U(n-m)$ in the unitary group $U(n)$ are naturally identified with the complex Grassmannian $\mathcal{G}_{m,n}$, the collection of $m$-dimensional subspaces of the $n$-dimensional linear space. 
Let $W \in \mathcal{G}_{m,n}$ be an $m$-dimensional subspace of an $n$-dimensional space $V$. 
By fixing a basis of $V$, we may use the projection matrix $P_W$ to represent the space $W$. 
The unitary group $U(n)$ acts on the complex Grassmannian $\mathcal{G}_{m,n}$ by $U : P_W \mapsto U P_W U^\dagger$. 
It induces an action of $U(n)$ on $\mathcal{G}_{m,n} \times \mathcal{G}_{m,n}$. 
The orbits of pairs of subspaces are characterized by the `angles' between the subspaces. 

\begin{definition}
  Let $X,Y \in \mathcal{G}_{m,n}$ be two subspaces of $V$. 
  The principal angles $\theta_1, \ldots, \theta_m$ between $X$ and $Y$ are defined as follows. 
  Let $x_1$ and $y_1$ be unit vectors in $X$ and $Y$ respectively. 
  Then $\theta_1$ is the smallest achievable angle between $x_1$ and $y_1$. 
  Formally 
  \begin{equation*}
    \theta_1 = \min \set{\arccos{\abs{\lrangle{x_1,y_1}}} : x_1 \in X, y_1 \in Y, \norm{x_1} = \norm{y_1} = 1}.
  \end{equation*} 
  Let $x_2$ and $y_2$ be unit vectors in $X \cap \Span\set{x_1}^\perp$ and $Y \cap \Span\set{y_1}^\perp$ respectively. 
  Then $\theta_2$ is the smallest achievable angle between $x_2$ and $y_2$. 
  Similarly we define $\theta_3$, $\theta_4$, up to $\theta_m$.
\end{definition}

The first $m$ eigenvalues of the matrix $P_X P_Y$ are exactly $\cos^2 \theta_1, \ldots, \cos^2 \theta_m$, and the remaining $n-m$ eigenvalues are $0$. 
These principal angles not only characterize the orbitals \cite[Lemma 2.1]{MR2577476}, but also determine the value of zonal functions. 

For the ease of notation, we use $y_i = \cos^2 \theta_i$ when referring to the principal angles. We also use $\kappa$ and $\sigma$ instead of $\widetilde{\lambda}$ for integer partitions into at most $m$ parts. 
Let us introduce Schur polynomial and some coefficients before giving the formula of zonal spherical functions. 

\begin{definition}
  Let $y = (y_1, y_2, \ldots, y_m)$ be the variables and $\sigma = (s_1, s_2, \ldots, s_m)$ be an non-increasing integer sequence of length $m$. 
  The Schur polynomial is a symmetric polynomial given by 
  \begin{equation}
    S_\sigma(y) = \frac{\det (y_i^{s_j+m-j})_{i,j}}{\det (y_i^{k-j})_{i,j}}.
  \end{equation}
  And the normalized Schur polynomial $S_{\sigma}^*(y)$ is a scaling of $S_{\sigma}(y)$ such that $S_{\sigma}^*(1, \ldots, 1) = 1$.
\end{definition}

\begin{definition}[{\cite[Theorem 5.3]{MR2577476}}]
  The ascending factorial is defined by
  \begin{equation}
    (a)^{\bar{s}} = a (a+1) \cdots (a+s-1).
  \end{equation}
  The complex hypergeometric coefficients are defined by 
  \begin{equation}
    [a]^{\sigma} = \prod_{i=1}^n (a-i+1)^{\bar{s}_i}
  \end{equation}
  where $\sigma = (s_1, s_2, \ldots, s_m)$ is a partition.

  The complex hypergeometric binomial coefficients $\hyperBinom{\kappa}{\sigma}$ are given by the expansion 
  \begin{equation}
    S_\kappa^*(y+1) = \sum_{\sigma \leq \kappa} \hyperBinom{\kappa}{\sigma} S_\sigma^*(y)
  \end{equation}
  where $y+1 = (y_1 + 1, \ldots, y_m + 1)$ and $(s_1, s_2, \ldots, s_m) = \sigma \leq \kappa = (k_1, k_2, \ldots, k_m)$ if and only if $s_i \leq k_i$ for every $i$. 
\end{definition}

\begin{theorem}[{\cite[Theorem 16.1]{MR0374523} \cite[Theorem 5.3]{MR2577476}}]
  Let $\kappa = \widetilde{\lambda}$ be the index of a spherical representation of $U(n)$ with respect to $U(m) \times U(n-m)$. 
  Let $\sigma = (s_1, \ldots, s_m)$ and $\kappa = (k_1, \ldots, k_m)$ be partitions of $s$ and $k$ respectively. 
  Let $\sigma + e_i = (s_1, \ldots, s_{i-1}, s_i +1, s_{i+1}, \ldots, s_m)$, and we call $\sigma + e_i$ valid if it is non-increasing. 
  We further put
  \begin{equation}
    \rho_\sigma = \sum_{i=1}^m s_i (s_i - 2i + 1),
  \end{equation}
  and 
  \begin{equation} \label{eqn:aphlaston}
    [c]_{(\kappa, \sigma)} = \sum_{i: \sigma + e_i \text{ is valid}} \frac{\hyperBinom{\kappa}{\sigma+e_i} \hyperBinom{\sigma + e_i}{\sigma}}{(k-s) \hyperBinom{\kappa}{\sigma}} \cdot \frac{[c]_{(\kappa, \sigma + e_i)}}{c + \frac{\rho_\kappa - \rho_\sigma}{k-s}}
  \end{equation}
  where the summation is over valid partitions $\sigma + e_i$. 

  Then up to a scaling, the zonal spherical function is given by
  \begin{equation}
    Z_{\kappa} (y) = \sum_{\sigma \leq \kappa} \frac{(-1)^s \hyperBinom{\kappa}{\sigma} [n]_{(\kappa, \sigma)}}{[m]_\sigma} S_{\sigma}^* (y)
  \end{equation}
\end{theorem}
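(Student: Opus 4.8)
The plan is to characterize $Z_\kappa$ as the unique polynomial solution of an eigenvalue problem for the radial Laplacian on the Grassmannian, and then to check that the displayed series is that solution. First I would recall that $Z_\kappa = Z^{(G,K)}_{\rho_\lambda}$ for the Gelfand pair $(G,K) = (U(n), U(m)\times U(n-m))$ is bi-$K$-invariant, hence descends to a function on the double coset space $K\backslash G/K$; by \cref{thm:stereometer} and the description of the orbits of pairs of subspaces through principal angles, this function is a symmetric polynomial in $y = (y_1,\dots,y_m)$, $y_i = \cos^2\theta_i$, of degree $k$. The representation-theoretic input is that $Z_\kappa$ is an eigenfunction of the radial part $\mathcal{D}$ of the Laplace--Beltrami operator on $\mathcal{G}_{m,n}$ --- a second-order generalized Jacobi operator --- with eigenvalue $2(\rho_\kappa + nk)$ in a suitable normalization; moreover $Z_\kappa(1,\dots,1) = 1$, and the component of $Z_\kappa$ along $S_\kappa^*$ in the Schur basis is nonzero.

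Next I would expand $Z_\kappa = \sum_{\sigma\leq\kappa} c_\sigma\,S_\sigma^*(y)$ and compute the action of $\mathcal{D}$ on each normalized Schur polynomial. The key lemma is the triangularity
\[
\mathcal{D}\,S_\sigma^* = 2(\rho_\sigma + ns)\,S_\sigma^* + \sum_{i:\,\sigma+e_i\ \text{valid}} a_{\sigma,i}\,S_{\sigma+e_i}^*,
\]
where the off-diagonal coefficients $a_{\sigma,i}$ are explicit and built from the generalized binomial coefficients $\hyperBinom{\sigma+e_i}{\sigma}$ and the factors $[m]_\sigma$ (the latter coming from the dimensions of the $K$-isotypic components, equivalently from the normalization $S_\sigma^*(1,\dots,1) = 1$); here the expansion $S_\kappa^*(y+1) = \sum_{\sigma} \hyperBinom{\kappa}{\sigma} S_\sigma^*(y)$ enters. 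Equating, in $\mathcal{D}Z_\kappa = 2(\rho_\kappa + nk)Z_\kappa$, the coefficient of $S_\sigma^*$ for each $\sigma < \kappa$ yields a relation expressing $c_\sigma$ as a weighted sum of the $c_{\sigma+e_i}$ with weight $\bigl(n + (\rho_\kappa - \rho_\sigma)/(k-s)\bigr)^{-1}$; this is precisely the recursion \cref{eqn:aphlaston} at $c = n$. Because $m \leq n/2$, the quantity $\rho_\mu + n\abs{\mu}$ strictly increases whenever a box is added to a partition $\mu$, so its values along every chain from $\sigma$ up to $\kappa$ are pairwise distinct, the denominators never vanish, and the recursion determines all the $c_\sigma$ from $c_\kappa$ (with base case $[n]_{(\kappa,\kappa)} = 1$).

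Finally I would solve the recursion downward from $\sigma = \kappa$, obtaining $c_\sigma = c_\kappa\,(-1)^s\,\hyperBinom{\kappa}{\sigma}\,[n]_{(\kappa,\sigma)}/[m]_\sigma$, and fix the overall scale by the normalization: since $S_\sigma^*(1,\dots,1) = 1$ for every $\sigma$, the condition $Z_\kappa(1,\dots,1) = 1$ reads $\sum_{\sigma\leq\kappa} c_\sigma = 1$, which I would verify by induction on $k - s$ straight from the recursion (this determines $c_\kappa$, a constant the theorem absorbs into ``up to a scaling''). I expect the main obstacle to be the explicit evaluation of $\mathcal{D}$ on Schur polynomials --- showing that the off-diagonal coefficients $a_{\sigma,i}$ carry exactly the factors $\hyperBinom{\sigma+e_i}{\sigma}$, $[m]_\sigma$ and $k-s$ that make the bookkeeping collapse to \cref{eqn:aphlaston} --- together with reconciling the several normalization conventions (of $S_\sigma^*$, of $Z_\kappa$, and of the complex hypergeometric coefficients) against the operator of \cite{MR0374523}. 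A less self-contained route is to take $\mathcal{D}$ and its spectrum as given from \cite{MR0374523} and merely verify that the displayed series is an eigenfunction with the correct eigenvalue and value $1$ at $(1,\dots,1)$, or else to follow the derivation of \cite{MR2577476} through the addition formula for the complex Grassmannian association scheme.
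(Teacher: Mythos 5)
This statement is not proved in the paper at all---it is quoted verbatim from the cited sources (James--Constantine's Theorem 16.1 and Roy's Theorem 5.3), so there is no internal proof to compare against. Your sketch is a correct outline of essentially the route those references take: realize $Z_\kappa$ as the degree-$k$ symmetric polynomial eigenfunction of the radial part of the Laplace--Beltrami operator, use the triangular action of that operator on the normalized Schur basis to get a downward recursion for the coefficients (which is exactly the recursion defining $[c]_{(\kappa,\sigma)}$ at $c=n$, with $m\le n/2$ ensuring the denominators $n(k-s)+\rho_\kappa-\rho_\sigma$ never vanish), and the substantive work you correctly flag---the explicit evaluation of the off-diagonal coefficients and the reconciliation of normalizations---is precisely what the cited papers carry out.
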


\begin{remark}
  In \cite[Formula 16.6]{MR0374523} the expression for $[c]_{\kappa, \sigma}$ has an extra factor $\hyperBinom{\kappa}{\sigma}$, which is redundant. 
\end{remark}

\subsection{Examples of zonal spherical functions for complex Grassmannian} \label{apdx:chold}

For the notation in the following formulas, please see \cref{sec:pectineus}.
\begin{align*}
    Z_{0}(y) &= S_{0}^*(y) = 1 \\
    Z_{1}(y) &= S_{0}^*(y) - \frac{n}{m} S_{1}^*(y) \\
    Z_{2}(y) &= S_{0}^*(y) - \frac{n+1}{m} S_{1}^*(y) + \frac{(n+1)(n+1)}{m(m+1)} S_{2}^*(y) \\
    Z_{3}(y) &= S_{0}^*(y) - \frac{3 (n+2)}{m} S_{1}^*(y) + \frac{3(n+2)(n+3)}{m(m+1)} S_{2}^*(y) - \frac{(n+2)(n+3)(n+4)}{m(m+1)(m+2)} S_{3}^*(y) \\
    Z_{4}(y) &= S_{0}^*(y) - \frac{4 (n+3)}{m} S_{1}^*(y) + \frac{6(n+3)(n+4)}{m(m+1)} S_{2}^*(y) - \frac{4(n+3)(n+4)(n+5)}{m(m+1)(m+2)} S_{3}^*(y) \\
    & + \frac{(n+3)(n+4)(n+5)(n+6)}{m(m+1)(m+2)(m+3)} S_{4}^*(y) 
\end{align*}
\begin{align*}
    Z_{1,1}(y) &= S_{0}^*(y) - \frac{2 (n-1)}{m} S_{1}^*(y) + \frac{(n-2)(n-1)}{(m-1)m} S_{1,1}^*(y) \\
    Z_{2,1}(y) &= S_{0}^*(y) - \frac{3 n}{m} S_{1}^*(y) + \frac{3 (n-2)n}{2(m-1)m} S_{1,1}^*(y) + \frac{3 n(n+2)}{2m(m+1)} S_{2,0}^*(y) - \frac{(n-2)n(n+2)}{(m-1)m(m+1)} S_{2,1}^*(y) \\
    Z_{3,1}(y) &= S_{0}^*(y) - \frac{4 (n+1)}{m} S_{1}^*(y) + \frac{2 (n-2)(n+1)}{(m-1)m} S_{1,1}^*(y) + \frac{4 (n+1)(n+3)}{m(m+1)} S_{2,0}^*(y) \\ &- \frac{8(n-2)(n+1)(n+3)}{3(m-1)m(m+1)} S_{2,1}^*(y) - \frac{4(n+1)(n+3)(n+4)}{3m(m+1)(m+2)} S_{3,0}^*(y) \\ &+ \frac{(n-2)(n+1)(n+3)(n+4)}{(m-1)m(m+1)(m+2)} S_{3,1}^*(y) \\
    Z_{2,2}(y) &= S_{0}^*(y) - \frac{4 n}{m} S_{1}^*(y) + \frac{3 (n-1)}{(m-1)m} S_{1,1}^*(y) + \frac{3 n(n+1)}{m(m+1)} S_{2}^*(y) \\ 
    &- \frac{4 (n-1)n(n+1)}{(m-1)m(m+1)} S_{2,1}^*(y) + \frac{(n-1)n^2 (n+1)}{(m-1)m^2 (m+1)} S_{2,2}^*(y) 
\end{align*}
\begin{align*}
    Z_{1,1,1,1}(y) &= S_{0}^*(y) - \frac{4 (n-3)}{m} S_{1}^*(y) + \frac{6(n-4)(n-3)}{(m-1)m} S_{1,1}^*(y) - \frac{4(n-5)(n-4)(n-3)}{(m-2)(m-1)m} S_{1,1,1}^*(y) \\
    &+ \frac{(n-6)(n-5)(n-4)(n-3)}{(m-3)(m-2)(m-1)m} S_{1,1,1,1}^*(y)\\
\end{align*}

\section{Examples of zonal spherical functions with common zero} \label{sec:gnathotheca}

\cref{exmp:nephroerysipelas,exmp:toreador,exmp:kelebe,exmp:chylifactive} provide the common zeroes of certain zonal spherical functions on complex Grassmannian. 
\cref{exmp:herniotomy} study the potential common zero of two zonal spherical functions with respect to the complex Clifford group.

\begin{example} \label{exmp:nephroerysipelas}
  The zonal spherical functions $Z_{1}(y)$ and $Z_{3}(y)$ with $m=1$ and $n=2$ has a common zero $y_1 = 1/2$. 
\end{example}

\begin{example} \label{exmp:toreador}
  The common zeroes of $Z_{2}(y)$ and $Z_{1,1}(y)$ with $m=2$ and $n=4$ can be given as algebraic numbers. 
  The equation 
  \begin{equation} \label{eqn:irian}
    \begin{aligned}
      0 &= 90 t^4-180 t^3+114 t^2-24 t+1
    \end{aligned}
  \end{equation}
  has $4$ real roots. 
  We may order the roots from the smallest to the largest.
  Let $y_1$ and $y_2$ be the 2nd and 4th zeroes of \cref{eqn:irian}, then $$(y_1, y_2) = \left(\frac{1}{2} \left(1-\sqrt{\frac{1}{15} \left(7-2 \sqrt{6}\right)}\right), \frac{1}{2} \left(1+\sqrt{\frac{1}{15} \left(7+2 \sqrt{6}\right)}\right) \right)$$ is one common zero of $Z_{2}(y)$ and $Z_{1,1}(y)$. 
  The 4 common zeroes are shown in \cref{fig:theotechnic} as intersection of the loci of $Z_{2}(y)$ and $Z_{1,1}(y)$ for $\mathcal{G}_{2,4}$.
\end{example}

\begin{figure}[htbp]
\centering
\includegraphics[scale=0.7]{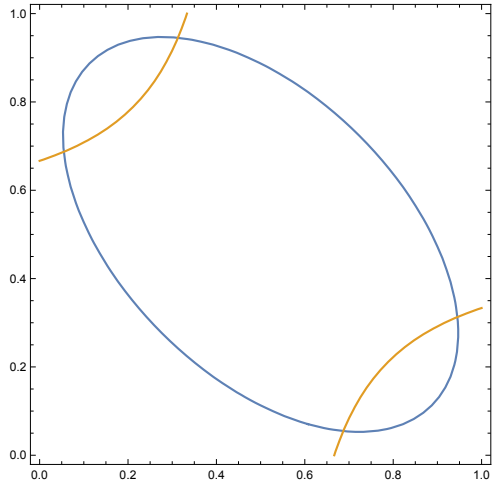}
\caption{Loci of $Z_{2}(y)$ and $Z_{1,1}(y)$ for $\mathcal{G}_{2,4}$}   \label{fig:theotechnic}
\end{figure}

\begin{example} \label{exmp:kelebe}
  The common zeroes of $Z_{1}(y)$, $Z_{3}(y)$ and $Z_{2,1}(y)$ with $m=2$ and $n=4$ form a line segment $y_1+y_2 = 1, 0 \leq y_1, y_2 \leq 1$. 
  Together with $Z_{3,1}(y)$, the four zonal spherical functions have common zeros.
  The equation 
  \begin{equation} \label{eqn:bozo}
    \begin{aligned}
      0 &= 70 t^4-140 t^3+90 t^2-20 t+1
    \end{aligned}
  \end{equation}
  has $4$ real roots. 
  We may order the roots from the smallest to the largest.
  Let $y_1$ and $y_2$ be the 2nd and 3rd zeroes of \cref{eqn:bozo}, then $$(y_1, y_2) = \left(\frac{1}{2} \left(1-\sqrt{\frac{1}{35} \left(15-2 \sqrt{30}\right)}\right), \frac{1}{2} \left(1+\sqrt{\frac{1}{35} \left(15-2 \sqrt{30}\right)}\right) \right)$$ is one common zero of $Z_{2}(y)$ and $Z_{1,1}(y)$. 
  The 4 common zeroes are shown in \cref{fig:Ppolyphloisboioism} as intersection of the loci of $Z_{3,1}(y)$ and $Z_{1}(y)$ for $\mathcal{G}_{2,4}$.
\end{example}

\begin{figure}[htbp]
\centering
\includegraphics[scale=0.7]{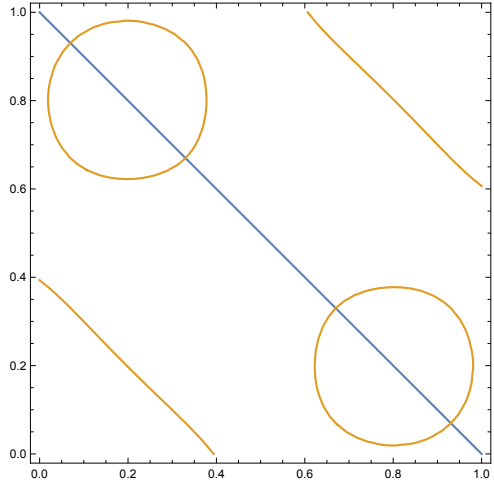}
\caption{Loci of $Z_{1}(y)$ and $Z_{3,1}(y)$ for $\mathcal{G}_{2,4}$}   \label{fig:Ppolyphloisboioism}
\end{figure}

\begin{example} \label{exmp:chylifactive}
  The common zeroes of $Z_{4}(y)$ and $Z_{2,2}(y)$ with $m=2$ and $n=4$ can be given as algebraic numbers. 
  The equation 
  \begin{equation} \label{eqn:looter}
    \begin{aligned}
      0 &=11430720000 t^{16}-91445760000 t^{15}+332951472000 t^{14}-730359504000 t^{13} \\
    &+1076946091200 t^{12}-1127785075200 t^{11}+863978226720 t^{10}-491476389600 t^9 \\
    &+208573299152 t^8-65783614208 t^7+15232863368 t^6-2533271096 t^5 \\
    &+292023188 t^4-22052192 t^3+993302 t^2-22634 t+197 
    \end{aligned}
  \end{equation}
  has $16$ real roots. 
  We may order the roots from the smallest to the largest.
  Let $y_1$ and $y_2$ be the 4th and 10th zeroes of \cref{eqn:looter}, then $(y_1, y_2) \approx (0.155944,0.648664)$ is one common zero of $Z_{4}(y)$ and $Z_{2,2}(y)$. 
  The 16 common zeroes are shown in \cref{fig:Propionibacterieae} as intersection of the loci of $Z_{1}(y)$ and $Z_{3,1}(y)$ for $\mathcal{G}_{2,4}$.
\end{example}

\begin{figure}[htbp]
\centering
\includegraphics[scale=0.7]{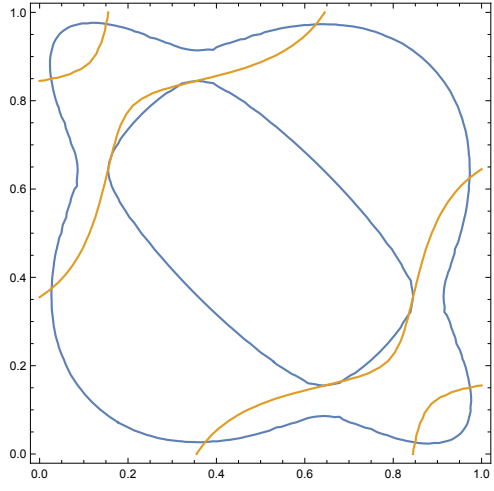}
\caption{Loci of $Z_{4}(y)$ and $Z_{2,2}(y)$ for $\mathcal{G}_{2,4}$}   \label{fig:Propionibacterieae}
\end{figure}

\begin{example} \label{exmp:herniotomy}
  Note that the $\mathcal{X}_2$-biinvariant function in $\rho_\lambda$ is equal to 
  \begin{equation}
    \frac{1}{\abs{\mathcal{X}_2}^2} \sum_{h_1,h_2 \in \mathcal{X}_2} \chi_\rho(h_1 g h_2) = \frac{1}{\abs{\mathcal{X}_2}} \sum_{h \in \mathcal{X}_2} \chi_\rho(g h).
  \end{equation}
  Therefore we can minimize 
  \begin{equation}
    I(g) = \frac{1}{2}\lrpar*{\abs*{\frac{1}{\abs{\mathcal{X}_2}} \sum_{h \in \mathcal{X}_2} \chi_{(4,0,0,-4)}(g h)} + \abs*{\frac{1}{\abs{\mathcal{X}_2}} \sum_{h \in \mathcal{X}_2} \chi_{(2,2,-2,-2)}(g h)}}
  \end{equation}
  by random optimization (See \cref{apdx:ahu} \cref{alg:stadholderate}) to find a common zero. 
  One potential common zero with $I(g) < 10^{-7}$ is given by 
  $$g_c = e^{-i \theta_1} (U_2(\theta_2, \theta_3, \theta_4) \otimes U_2(\theta_5, \theta_6, \theta_7)) U_4(\theta_8, \theta_9, \theta_{10}) (U_2(\theta_{11}, \theta_{12}, \theta_{13}) \otimes U_2(\theta_{14}, \theta_{15}, \theta_{16}))$$ where

  $$
    U_2(\alpha, \beta, \gamma) = 
    \begin{bmatrix}
      e^{-i \alpha} & 0 \\
      0 & e^{i \alpha}
    \end{bmatrix}
    \cdot 
    \begin{bmatrix}
      \cos \beta & -\sin \beta \\
      \sin \beta & \cos \beta
    \end{bmatrix}
    \cdot 
    \begin{bmatrix}
      e^{-i \gamma} & 0 \\
      0 & e^{i \gamma}
    \end{bmatrix},
  $$

  $$U_4(\alpha, \beta, \gamma) = \exp \lrpar{- I (\alpha \sigma_x \otimes \sigma_x + \beta \sigma_y \otimes \sigma_y + \gamma \sigma_z \otimes \sigma_z)}$$
  and 
  \begin{align*}
    \sigma_x = \begin{bmatrix}
      0 & 1 \\
      1 & 0
    \end{bmatrix}  
    , 
    \sigma_y = \begin{bmatrix}
      0 & -i \\
      i & 0
    \end{bmatrix}  
    , 
    \sigma_z = \begin{bmatrix}
      1 & 0 \\
      0 & -1
    \end{bmatrix}  
  \end{align*}
  with
  \begin{align*}
    (\theta_i)_{i=1}^{16} \approx  ( & 3.39082, 1.50097, 5.69898, 2.53181, 1.25383, 0.0170001, 6.21127, 
0.376407,\\
& 
 0.368786, 3.69014, 4.66335, 3.04854, 1.45524, 0.337423, 
3.38137, 3.82503)
  \end{align*}

                  \end{example}

\begin{remark}
  The $\mathcal{X}_2$-biinvariant function in $\rho_\lambda$ is the average of the value of the character $\chi_\lambda$ over the coset $g\mathcal{X}_2$. 
  Different from the situation of the $U(m)\times U(n-m)$-biinvariant function, we do not have a neat formula for the $\mathcal{X}_2$-biinvariant function. 
  If we regard $\chi_{(4,0,0,-4)(g)}$ as a function of the entries of $g$, then it is a polynomial of degree $4$ in the variables $g_{i,j}$, $1 \leq i,j \leq 4$ and of degree $4$ in the variables $\overline{g_{i,j}}$, $1 \leq i,j \leq 4$. 
  The number of monomials in the expansion is more than $10^4$. 
  Therefore the symbolic computation of the $\mathcal{X}_2$-biinvariant function is not easy. 
  On the other hand, the numerical computation of the value is relatively affordable.
\end{remark}

\section{Algorithms to find zeroes and common zeroes} \label{apdx:ahu}

\begin{algorithm}
  \caption{Find Zero by Bisection}\label{alg:ostensively}
  \begin{algorithmic}[1]
    \Function{FindZeroOnLinearGroup}{$f, L,R,\epsilon$}
      \While{$\norm{L-R} > \epsilon$}
        \State{$M \gets \exp \left((\log L + \log R)/2 \right)$.}
      \If{$f(M) < 0$}
        \State{$L \gets M$}
      \Else
        \State{$R \gets M$}
      \EndIf
      \EndWhile 
      \State{\Return{$M$}}
    \EndFunction
  \end{algorithmic}
\end{algorithm}

\begin{algorithm}
  \caption{Find Common Zero by Random Optimization}\label{alg:stadholderate}
  \begin{algorithmic}[1]
    \Function{FindCommonZero}{$I, \theta, \epsilon, h$}
      \State{$U \gets U(\theta)$}
      \While{$I(U) > \epsilon$}
        \State{$\Delta\theta \gets $ random parameters in $[0, 2\pi]$}
        \State{$U' \gets U(\theta + h\Delta\theta)$.}
      \If{$I(U') < I(U)$}
        \State{$\delta \gets \delta \frac{I(U')}{I(U)}$}
        \State{$\theta \gets \theta + h\Delta\theta$}
      \EndIf
      \EndWhile 
      \State{\Return{$U$}}
    \EndFunction
  \end{algorithmic}
\end{algorithm}

\end{document}